\definecolor{darkgreen}{RGB}{45, 119, 75}
\newcommand{\supp}{\text{supp }}
\newcommand{\BMO}{{\rm BMO}}
\newcommand{\VMO}{{\rm VMO}}
\newtheorem{theorem}{Theorem}[section]
\newtheorem{lemma}[theorem]{Lemma}
\numberwithin{equation}{section}
\begin{document}

\title[Commutators]{A note on commutators of singular integrals with $\BMO$ and $\VMO$ functions in the Dunkl setting}

\subjclass[2020]{{primary: 44A20, 42B20, 42B25, 47B38, 35K08, 33C52, 39A70}}
\keywords{rational Dunkl theory, root systems, generalized translations, singular integrals, commutators, bounded mean oscillation spaces, Riesz transforms}

\author[Jacek Dziubański]{Jacek Dziubański}
\author[Agnieszka Hejna]{Agnieszka Hejna}
\begin{abstract}  On $
\mathbb R^N$ equipped with a root system $R$, multiplicity function $k \geq 0$, and the associated measure $dw(\mathbf{x})=\prod_{\alpha \in R}|\langle \mathbf{x},\alpha\rangle|^{k(\alpha)}\,d\mathbf{x}$, we consider a (non-radial) kernel ${K}(\mathbf{x})$ which has properties similar to those from the classical theory of singular integrals and the Dunkl convolution operator $\mathbf{T}f=f*K$ associated with ${K}$. Assuming that $b$ belongs to the  ${\rm BMO}$ space on the space of homogeneous type  $X=(\mathbb{R}^N,\|\cdot\|,dw)$, we prove that the commutator $[b,\mathbf{T}]f(\mathbf{x})=b(\mathbf{x})\mathbf{T}f(\mathbf{x})-\mathbf{T}(bf)(\mathbf{x})$ is a bounded operator on $L^p(dw)$ for all $1<p<\infty$. Moreover, $[b,\mathbf T]$ is compact on $L^p(dw)$, provided $b\in {\rm VMO} (X)$. The paper extents results of Han, Lee, Li and Wick. 
\end{abstract}

\address{Jacek Dziubański, Uniwersytet Wroc\l awski,
Instytut Matematyczny,
Pl. Grunwaldzki 2,
50-384 Wroc\l aw,
Poland}
\email{jdziuban@math.uni.wroc.pl}

\address{Agnieszka Hejna, Uniwersytet Wroc\l awski,
Instytut Matematyczny,
Pl. Grunwaldzki 2,
50-384 Wroc\l aw,
Poland 
\&
Department of Mathematics,
Rutgers University,
Piscataway, NJ 08854-8019, USA}
\email{hejna@math.uni.wroc.pl}

\maketitle

\section{Introduction }\label{Sec:Intro}
 \subsection{Introduction} Consider $\mathbb R^N$ equipped with a root system $R$ and a non-negative multiplicity function $k\geq 0$. 
 Let $dw(\mathbf x)=\prod_{\alpha\in R} |\langle \alpha, \mathbf x\rangle |^{k(\alpha)}\;d\mathbf x$
  be the associated measure. For $f\in L^1_{loc}(dw)$ and a measurable bounded set $E\subset \mathbb R^N$, we denote
  \begin{equation}\label{eq:mean}
      f_E:=\frac{1}{w(E)}\int_E f(\mathbf x)\, dw(\mathbf x).
  \end{equation}
  Let $G$ be the Coxeter group generated  by the reflections $\sigma_\alpha$, 
  $\alpha\in R$. For  $E\subset \mathbb R^N$, we set 
  $$\mathcal O(E)=\{\sigma(\mathbf x): \sigma\in G, \ \mathbf x\in E\}. $$
 In {Han, Lee, Li, and Wick }~\cite{HLLW} the authors investigated  two types of ${\rm BMO}$ and ${\rm VMO}$ spaces in the Dunkl setting which are connected  with two distances: the Euclidean distance $\|\mathbf x-\mathbf y\|$ and the orbit distance $d(\mathbf x,\mathbf y)=\min_{\sigma\in G} \| \mathbf x-\sigma(\mathbf y)\|$. The spaces ${\rm BMO}$ and ${\rm BMO}_d$ are  defined as 
 \begin{align*}
      {\rm BMO}=\{b\in L^1_{\rm loc} (dw): \|b\|_{{\rm BMO}}<\infty\}, \ \ \| b\|_{{\rm BMO}}:=\sup_{B}\frac{1}{w(B)}\int_B |b(\mathbf x)- b_B|\, dw(\mathbf{x}),
 \end{align*}
\begin{align*}
  {\rm BMO}_d=\{b\in L^1_{\rm loc}(dw): \| b\|_d<\infty\}, \ \   \| b\|_d:=\sup_{B} \frac{1}{w(\mathcal O(B))}\int_{\mathcal O(B)}|b(\mathbf x)-b_{{\mathcal O(B)}}|\, dw(\mathbf x),
\end{align*}
where the supremum  is taken over all Euclidean balls $B=B(\mathbf y,r)=\{\mathbf z\in\mathbb R^N: \|\mathbf y-\mathbf z\|<r\}$. The space ${\rm BMO}_d$ is a proper subspace of ${\rm BMO}$ (see~\cite{JL}) and 
 $$\| b\|_{\BMO}\leq C\|b\|_{\BMO_d}\quad \text{for } b\in \BMO_d.$$ 
 In~\cite{HLLW}  commutators of $\BMO$ and $\BMO_d$ functions with the Dunkl-Riesz transforms $R_j$ are studied. The Dunkl-Riesz transforms are Calder\'on-Zygmund type operators which are formally 
 defined by $R_j=T_{e_j}(-\Delta_k)^{-1/2}$, where $T_{e_j}$ are the Dunkl 
 operators (see~\eqref{eq:T_xi}) and $\Delta_k=\sum_{j=1}^N T_{e_j}^2$ is the Dunkl Laplacian. They were studied by Thangavelu and Xu~\cite{ThangaveluXu1} (in dimension 1 and in the product case) and by Amri and Sifi~\cite{AS} (in higher dimensions)  who proved their bounds on $L^p(dw)$ spaces. One of the main results of~\cite{HLLW} asserts that if $b\in \BMO_d$, then the commutator $[b,R_j]f(\mathbf x)=b(\mathbf x)R_jf(\mathbf x)-R_j(b(\cdot)f(\cdot))(\mathbf x)$ is a bounded operator on $L^p(dw)$ for $1<p<\infty$ and 
 \begin{equation}\label{eq:BMO_d_upper}
 \| [b,R_j]\|_{L^p(dw)\to L^p(dw)}\lesssim \| b\|_{\BMO_d}.
 \end{equation}
Conversely, if for $b\in L^1_{\rm loc}(dw)$, the commutator $[b,R_j]$ is bounded on $L^p(dw)$ for some $1<p<\infty$, then $b\in \BMO$ and 
\begin{equation}\label{eq:lower_Han}
\|b\|_{\BMO}\lesssim \| [b,R_j]\|_{L^p(dw)\to L^p(dw)}.
\end{equation} 
The authors of~\cite{HLLW} raised the  question if the possible lower bound $  \|b\|_{{\rm BMO}_d}\lesssim \| [b,R_j]\|_{L^p(dw)\to L^p(dw)}$ holds true. 

Our first goal in this note is to improve \eqref{eq:BMO_d_upper} by showing that it holds for $b\in \BMO$, that is, there is a constant $C_p>0$
 such that 
\begin{equation}\label{eq:BMO_upper}
 \| [b,R_j]\|_{L^p(dw)\to L^p(dw)}\leq C_p\| b\|_{{\rm BMO}}
 \end{equation}
 (see Theorem \ref{teo:bounded}). 
 
 Let us point out that \eqref{eq:BMO_upper} gives a negative answer to the question formulated above, because otherwise we would get  $\|b\|_{\BMO_d}\lesssim  \| [b,R_j]\|_{L^p(dw)\to L^p(dw)}\lesssim \| b\|_{\BMO}$, which is impossible (see~\cite{JL}).

An essential part of~\cite{HLLW} is devoted for studying compactness of the commutators of ${\rm VMO}$ functions with the Dunkl-Riesz transforms. The ${\rm VMO}$ and ${\rm VMO}_d$ spaces are defined as the closures of the sets of compactly supported  functions from the Lipschitz spaces $\Lambda$ and $\Lambda_d$ in the norms $\|\cdot\|_{{\rm BMO}}$ and $\|\cdot\|_{{\rm BMO}_d}$ respectively. Then ${\rm VMO}_d\subset {\rm VMO}$ and $\|b\|_{{\rm VMO}}\lesssim \|b\|_{{\rm VMO}_d}$ and, thanks to~\cite[Theorem 4.1]{CW}, the dual space to ${\rm VMO}$ is the Hardy space $H^1$ considered in~\cite{ADzH} and~\cite{DH-atom}.  Theorem 1.5 of~\cite{HLLW} states that if $b\in {\rm VMO}_d$,  then 
the commutator $[b,R_j]$ is a compact operator on $L^p(dw)$ for all $1<p<\infty$. Our second aim is to extend this result for all $b\in {\rm VMO}$ (see Theorem \ref{teo:compact}). 
Actually we will prove \eqref{eq:BMO_d_upper} and the compactness result for commutators $[b,\mathbf T]$  of $\BMO$ and $\VMO$ functions with  Dunkl singular integral operators $\mathbf T$  of convolution type (under certain regularity for the associated kernels $ K(\mathbf x)$). The Dunkl--Riesz transforms are the basic examples of such operators. 

Let us remark that for the Riesz transforms, the lower bounds~\eqref{eq:lower_Han} proved in~\cite{HLLW} together with the upper bounds~\eqref{eq:BMO_upper} (see Theorem~\ref{teo:bounded}) generalize (to the Dunkl setting) the classical results of Coifman, Rochberg and Weiss~~\cite{CRW} and Janson~\cite{Janson} obtained  on the Euclidean spaces $(\mathbb R^N, \, d\mathbf x)$. As far as the compactness is concerned, the necessity result~\cite[Theorem 1.5]{HLLW} together with the sufficiency result (see Theorem~\ref{teo:compact}) extend to the Dunkl theory the classical theorems of Uchiyama~\cite{Uchiyama} about the characterization of the ${\rm VMO}$ functions  by commutators with the Riesz transforms.  

\section{Preliminaries}

\subsection{Dunkl theory}

In this section we present basic facts concerning the theory of the Dunkl operators.   For more details we refer the reader to~\cite{Dunkl},~\cite{Roesle99},~\cite{Roesler3}, and~\cite{Roesler-Voit}. 

We consider the Euclidean space $\mathbb R^N$ with the scalar product $\langle \mathbf{x},\mathbf y\rangle=\sum_{j=1}^N x_jy_j
$, where $\mathbf x=(x_1,...,x_N)$, $\mathbf y=(y_1,...,y_N)$, and the norm $\| \mathbf x\|^2=\langle \mathbf x,\mathbf x\rangle$.

A {\it normalized root system}  in $\mathbb R^N$ is a finite set  $R\subset \mathbb R^N\setminus\{0\}$ such that $R \cap \alpha \mathbb{R} = \{\pm \alpha\}$,  $\sigma_\alpha (R)=R$, and $\|\alpha\|=\sqrt{2}$ for all $\alpha\in R$, where $\sigma_\alpha$ are defined by 
\begin{equation}\label{reflection}\sigma_\alpha (\mathbf x)=\mathbf x-2\frac{\langle \mathbf x,\alpha\rangle}{\|\alpha\|^2} \alpha.
\end{equation}

The finite group $G$ generated by the reflections $\sigma_{\alpha}$, $\alpha \in R$, is called the {\it Coxeter group} ({\it reflection group}) of the root system.

A~{\textit{multiplicity function}} is a $G$-invariant function $k:R\to\mathbb C$ which will be fixed and $\geq 0$  throughout this paper.  

The associated measure $dw$ is defined by $dw(\mathbf x)=w(\mathbf x)\, d\mathbf x$, where 
 \begin{equation}\label{eq:measure}
w(\mathbf x)=\prod_{\alpha\in R}|\langle \mathbf x,\alpha\rangle|^{k(\alpha)}.
\end{equation}
Let $\mathbf{N}=N+\sum_{\alpha \in R}k(\alpha)$. Then, 
\begin{equation}\label{eq:t_ball} w(B(t\mathbf x, tr))=t^{\mathbf{N}}w(B(\mathbf x,r)) \ \ \text{\rm for all } \mathbf x\in\mathbb R^N, \ t,r>0.
\end{equation}
Observe that there is a constant $C>0$ such that for all $\mathbf{x} \in \mathbb{R}^N$ and $r>0$ we have
\begin{equation}\label{eq:balls_asymp}
C^{-1}w(B(\mathbf x,r))\leq  r^{N}\prod_{\alpha \in R} (|\langle \mathbf x,\alpha\rangle |+r)^{k(\alpha)}\leq C w(B(\mathbf x,r)),
\end{equation}
so $dw(\mathbf x)$ is doubling. Moreover, there exists a constant $C\ge1$ such that,
for every $\mathbf{x}\in\mathbb{R}^N$ and for all $r_2\ge r_1>0$,
\begin{equation}\label{eq:growth}
C^{-1}\Big(\frac{r_2}{r_1}\Big)^{N}\leq\frac{{w}(B(\mathbf{x},r_2))}{{w}(B(\mathbf{x},r_1))}\leq C \Big(\frac{r_2}{r_1}\Big)^{\mathbf{N}}.
\end{equation}

For $\xi \in \mathbb{R}^N$, the {\it Dunkl operators} $T_\xi$  are the following $k$-deformations of the directional derivatives $\partial_\xi$ by   difference operators:
\begin{equation}\label{eq:T_xi}
     T_\xi f(\mathbf x)= \partial_\xi f(\mathbf x) + \sum_{\alpha\in R} \frac{k(\alpha)}{2}\langle\alpha ,\xi\rangle\frac{f(\mathbf x)-f(\sigma_\alpha(\mathbf{x}))}{\langle \alpha,\mathbf x\rangle}.
\end{equation}
The Dunkl operators $T_{\xi}$, which were introduced in~\cite{Dunkl}, commute and are skew-symmetric with respect to the $G$-invariant measure $dw$. 

The closures of connected components of
\begin{equation*}
    \{\mathbf{x} \in \mathbb{R}^{N}\;:\; \langle \mathbf{x},\alpha\rangle \neq 0 \text{ for all }\alpha \in R\}
\end{equation*}
are called (closed) \textit{Weyl chambers}. We remark that $\|\mathbf x-\mathbf y\|=d(\mathbf x,\mathbf y)$ if and only if $\mathbf x,\mathbf y \in \mathbb{R}^N$ belong to the same closed Weyl chamber. 

Let \begin{equation}
    Mf(\mathbf{x})=\sup_{B \ni \mathbf{x}}\frac{1}{w(B)}\int_{B}|f(\mathbf{y})|\,dw(\mathbf{y}),
\end{equation}
denote the (uncentered) Hardy--Littlewood maximal function on $(\mathbb{R}^N,\|\cdot\|,dw)$.

\subsection{Dunkl transform}
For fixed $\mathbf y\in\mathbb R^N$, the {\it Dunkl kernel} $\mathbf{x} \longmapsto E(\mathbf x,\mathbf y)$ is a unique solution to the system 
$$T_\xi f=\langle \xi ,\mathbf y\rangle f, \quad f(0)=1.$$
The function $E(\mathbf x,\mathbf y)$, which generalizes the exponential function $e^{\langle\mathbf x,\mathbf y\rangle}$, has a unique extension to a holomorphic  function $E(\mathbf z,\mathbf w)$ on $\mathbb C^N\times \mathbb C^N$. 
Let $f \in L^1(dw)$. We define the \textit{Dunkl transform }$\mathcal{F}f$ of $f$ by
\begin{equation}\label{eq:Dunkl_transform}
    \mathcal{F} f(\xi)=\mathbf{c}_k^{-1}\int_{\mathbb{R}^N}f(\mathbf{x})E(\mathbf{x},-i\xi)\, {dw}(\mathbf{x}), \ \  \mathbf{c}_k=\int_{\mathbb{R}^N}e^{-\frac{{\|}\mathbf{x}{\|}^2}2}\,{dw}(\mathbf{x}){>0}
\end{equation}
The Dunkl transform is a generalization of the Fourier transform on $\mathbb{R}^N$. It was introduced in~\cite{D5} for $k \geq 0$ and further studied in~\cite{dJ1} in the more general context.  It was proved in~\cite[Corollary 2.7]{D5} (see also~\cite[Theorem 4.26]{dJ1}) that it extends uniquely  to  an isometry on $L^2(dw)$.
We have also the following inversion theorem (\cite[Theorem 4.20]{dJ1}): for all $f \in L^1(dw)$ such that $\mathcal{F}f \in L^1(dw)$ we have $f(\mathbf{x})=(\mathcal{F})^2f(-\mathbf{x}) \text{ for almost all }\mathbf{x} \in \mathbb{R}^N\textup{.}$ The inverse $\mathcal F^{-1}$ of $\mathcal{F}$ has the form
  \begin{equation}\label{eq:inverse_teo} \mathcal F^{-1} f(\mathbf{x})=\mathbf{c}_k^{-1}\int_{\mathbb R^N} f(\xi)E(i\xi, \mathbf x)\, dw(\xi)= \mathcal{F}f(-\mathbf{x})\quad \text{\rm for } f\in L^1(dw)\textup{.}
  \end{equation}

\subsection{Dunkl translations}
Suppose that $f \in \mathcal{S}(\mathbb{R}^N)$ {
(the Schwartz class of functions on $\mathbb R^N$)} and $\mathbf{x} \in \mathbb{R}^N$. We define the \textit{Dunkl translation }$\tau_{\mathbf{x}}f$ of $f$ to be
\begin{equation}\label{eq:translation}
    \tau_{\mathbf{x}} f(-\mathbf{y})=\mathbf{c}_k^{-1} \int_{\mathbb{R}^N}{E}(i\xi,\mathbf{x})\,{E}(-i\xi,\mathbf{y})\,\mathcal{F}f(\xi)\,{dw}(\xi)=\mathcal{F}^{-1}(E(i\cdot,\mathbf{x})\mathcal{F}f)(-\mathbf{y}).
\end{equation}
The Dunkl translation was introduced in~\cite{R1998}. The definition can be extended to functions which are not necessary in $\mathcal{S}(\mathbb{R}^N)$. For instance, thanks to the Plancherel's theorem, one can define the Dunkl translation of $L^2(dw)$ function $f$ by
\begin{equation}\label{eq:translation_Fourier}
    \tau_{\mathbf{x}}f(-\mathbf{y})=\mathcal{F}^{-1}(E(i\cdot,\mathbf{x})\mathcal{F}f(\cdot))(-\mathbf{y})
\end{equation}
(see~\cite{R1998} of~\cite[Definition 3.1]{ThangaveluXu}). In particular, the operators $f \mapsto \tau_{\mathbf{x}}f$ are contractions on $L^2(dw)$. Here and subsequently, for a reasonable function $g(\mathbf{x})$, we  write $g(\mathbf x,\mathbf y):=\tau_{\mathbf x}g(-\mathbf y)$.

We will need the following result concerning the support of the Dunkl translation of a compactly supported function.

\begin{theorem}[{\cite[Theorem 1.7]{DzH}}]\label{teo:support}
 Let $f \in L^2(dw)$, $\text{\rm supp}\, f \subseteq B(0,r)$, and $\mathbf{x} \in \mathbb{R}^N$. Then  
\begin{equation}\label{eq:inclusion_support}
\text{\rm supp}\, \tau_{\mathbf{x}}f(-\, \cdot) \subseteq \mathcal{O}(B(\mathbf x,r)).
\end{equation}
\end{theorem}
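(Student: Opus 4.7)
My plan proceeds in three stages. First, by the contractivity of $\tau_{\mathbf x}$ on $L^2(dw)$ and the fact that the subspace of $L^2$-functions supported in the closed set $\mathcal O(\overline{B(\mathbf x,r)})$ is norm-closed, it suffices to prove the inclusion on a dense subfamily of $\{f\in L^2(dw):\operatorname{supp} f\subseteq B(0,r)\}$, so I may assume $f\in C_c^{\infty}(B(0,r))$. Second, since $\mathcal O(B(\mathbf x,r))$ is open (a finite union of open Euclidean balls of radius $r$ centered at the Weyl orbit of $\mathbf x$), the desired inclusion is equivalent to the vanishing $\int_{\mathbb R^N}\tau_{\mathbf x}f(-\mathbf y)\varphi(\mathbf y)\,dw(\mathbf y)=0$ for every $\varphi\in C_c^{\infty}(\mathbb R^N)$ with $\operatorname{supp}\varphi\cap\overline{\mathcal O(B(\mathbf x,r))}=\emptyset$.

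The heart of the argument is to establish this vanishing. For radial $F\in C_c(\overline{B(0,r)})$, the inclusion is classical due to R\"osler: the Dunkl translation of a radial function admits a representation as an integral against a positive probability measure supported in a set controlled by the orbit distance, from which $\tau_{\mathbf x}F(-\mathbf y)=0$ whenever $d(\mathbf x,\mathbf y)>r$. For a general non-radial $f$, I would reduce to the radial case by one of two routes: (a) a spherical-harmonic decomposition combined with the R\"osler intertwining operator $V_k$, which relates the Dunkl translation of each angular piece to a classical translation, or (b) a spectral approximation of the multiplier $E(i\xi,\mathbf x)$ by Dunkl wave propagators $\cos(t\sqrt{-\Delta_k})$, which enjoy finite speed of propagation in the orbit metric and therefore preserve the desired support inclusion under the limiting process.

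The main obstacle is precisely this passage from the radial to the non-radial setting. The Fourier multiplier $E(i\xi,\mathbf x)$ defining $\tau_{\mathbf x}$ is not reflection-invariant in $\xi$, so a direct Plancherel argument cannot simply invoke the radial support theorem. Either reduction scheme requires careful bookkeeping of the angular or polynomial factors that appear in the decomposition, to ensure that neither the decomposition nor the subsequent passage to the limit enlarges the support beyond $\mathcal O(B(\mathbf x,r))$. Verifying this stability of supports, and in particular controlling the contribution of the non-invariant part of $E(i\xi,\mathbf x)$, is where I expect the technical work to lie.
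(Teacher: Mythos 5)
The paper does not prove this statement: Theorem~\ref{teo:support} is imported verbatim from~\cite[Theorem 1.7]{DzH} and used as a black box. There is therefore no internal proof to compare your outline against, and a referee of \emph{this} paper would not expect one.

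As a stand-alone proposal, your first two reductions (density to $C_c^\infty(B(0,r))$, duality against test functions supported off $\overline{\mathcal O(B(\mathbf x,r))}$) are fine, and your identification of R\"osler's radial support theorem as the base case is correct. But you explicitly leave the heart of the matter --- the passage from radial $f$ to general $f$ --- as an unresolved ``main obstacle,'' so the proposal has a genuine gap precisely where the theorem is hard. Moreover, your route (b) is not merely technically delicate but appears to start from the wrong object: the wave propagators $\cos(t\sqrt{-\Delta_k})$ are \emph{radial} Dunkl--Fourier multipliers ($\xi\mapsto\cos(t\|\xi\|)$), whereas $E(i\xi,\mathbf x)$ is genuinely non-radial in $\xi$, so one cannot ``spectrally approximate'' the latter by the former; finite speed of propagation is the mechanism behind the \emph{radial} case, not a device for extending it. Route (a) (generalized spherical harmonics together with the intertwining operator $V_k$) is closer in spirit to what is actually needed, but you have not checked that the $h$-harmonic decomposition of a function supported in $B(0,r)$ stays supported in $B(0,r)$ term by term, nor that the angular factors do not enlarge the translated support --- and these are exactly the points you yourself flag as unverified. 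In short: the skeleton is plausible, but the argument is not complete, and the only concrete mechanism you put forward (route (b)) does not apply to the non-radial part of $\tau_{\mathbf x}$.
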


\subsection{Dunkl convolution}
Assume that $f,g \in L^2(dw)$. The \textit{generalized convolution} (or \textit{Dunkl convolution}) $f*g$ is defined by the formula
\begin{equation}\label{eq:conv_transform}
    f*g(\mathbf{x})=\mathbf{c}_k\mathcal{F}^{-1}\big((\mathcal{F}f)(\mathcal{F}g)\big)(\mathbf{x}),
\end{equation}
equivalently, by
\begin{equation}\label{eq:conv_translation}
    (f*g)(\mathbf{x})=\int_{\mathbb{R}^N}f(\mathbf{y})\,\tau_{\mathbf{x}}g(-\mathbf{y})\,{dw}(\mathbf{y})=\int_{\mathbb{R}^N}g(\mathbf{y})\,\tau_{\mathbf{x}}f(-\mathbf{y})\,{dw}(\mathbf{y}).
\end{equation}
Generalized convolution of $f,g \in \mathcal{S}(\mathbb{R}^N)$ was considered in~\cite{R1998} and~\cite{Trimeche}, the definition was extended to $f,g \in L^2(dw)$ in~\cite{ThangaveluXu}.

\subsection{Singular integral kernels}

Let us consider a (non-radial) kernel $K(\mathbf{x})$ which has properties similar to those
from the classical theory. Namely, let $s_0$ be an even positive integer larger than $\mathbf{N}$, which will be fixed in the whole paper. Consider a function  $K\in C^{s_0} (\mathbb R^N\setminus \{0\})$ such that 
\begin{equation}\label{eq:uni_on_annulus}\tag{A} \sup_{0<a<b<\infty} \Big| \int_{a<\|\mathbf x\|<b} K(\mathbf x)\, dw(\mathbf x)\Big|<\infty,  
\end{equation} 
\begin{equation}\label{eq:assumption1}\tag{D} 
\Big|\frac{\partial^\beta}{\partial \mathbf x^\beta} K(\mathbf x)\Big|\leq C_{\beta}\|\mathbf x\|^{-\mathbf N-|\beta|} \quad \text{for all} \ |\beta |\leq s_0, 
\end{equation}
\begin{equation}\label{eq:limitA}\tag{L} 
     \lim_{\varepsilon \to 0} \int_{\varepsilon <\|\mathbf x\|<1} K(\mathbf x)\, dw(\mathbf x)=L \text{  for some  }L \in \mathbb{C}.
     \end{equation} 
Set 
\begin{align*}
    K^{\{t\}}(\mathbf x)=K(\mathbf x)(1-\phi(t^{-1} \mathbf x)),    
\end{align*}
where $\phi$ is a  fixed  radial $C^\infty$-function supported by the unit ball $B(0,1)$ such that $\phi (\mathbf x)=1$ for $\|\mathbf x\|<1/2$. The following theorem was proved in~\cite{singular}.

\begin{theorem}\label{toe:from_singular}
Assume that~\eqref{eq:uni_on_annulus} and \eqref{eq:assumption1} are satisfied, then
\begin{enumerate}[(i)]
    \item{\textup{(}Theorems 4.1 and 4.2 of~\cite{singular}\textup{)}  the operators $f \mapsto f*K^{\{t\}}$ are bounded on $L^p(dw)$ for $1<p<\infty$ and they are of weak--type $(1,1)$ with the bounds independent of $t>0$;} \label{numitem:bounded_truncated}
    \item{\textup{(}Theorems 3.7 and 4.3 of~\cite{singular}\textup{)} assuming additionally~\eqref{eq:limitA}, the  limit $\lim_{t\to 0} f*K^{\{t\}} (\mathbf x)$ exists and defines a bounded operator $\mathbf T$ on $L^p(dw)$ for $1<p<\infty$, which is of weak-type (1,1) as well.}\label{numitem:bounded}
\end{enumerate}
\end{theorem}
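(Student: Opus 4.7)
The overall strategy is to realize each $f\mapsto f*K^{\{t\}}$ as a Calder\'on--Zygmund operator on the space of homogeneous type $(\mathbb R^N,\|\cdot\|,dw)$ with constants independent of $t$, and then to obtain part~(ii) by passing to the limit using condition~\eqref{eq:limitA}.

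I would first establish uniform $L^2(dw)$ boundedness. Since $f*K^{\{t\}}$ is a Dunkl convolution, Plancherel for the Dunkl transform reduces this to showing $\|\mathcal F K^{\{t\}}\|_{\infty}$ is bounded uniformly in $t$. Writing $K^{\{t\}}$ as a sum of smooth dyadic annular pieces and exploiting the cancellation provided by~\eqref{eq:uni_on_annulus} on the large annuli together with the derivative bounds~\eqref{eq:assumption1} on the small ones (integration by parts against $E(\mathbf x,-i\xi)$ is justified by the regularity of $K$), one obtains the required uniform $L^\infty$ bound; the assumption $s_0>\mathbf N$ is precisely what makes the resulting geometric series convergent.

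Next I would verify Calder\'on--Zygmund size and regularity for the integral kernel $K^{\{t\}}(\mathbf x,\mathbf y):=\tau_{\mathbf x}K^{\{t\}}(-\mathbf y)$ of the convolution operator, formulated in terms of the orbit quasi-metric $d$. Concretely, one needs the size bound $|K^{\{t\}}(\mathbf x,\mathbf y)|\lesssim 1/w(B(\mathbf x,d(\mathbf x,\mathbf y)))$ and a H\"ormander-type integral smoothness estimate. These follow from~\eqref{eq:assumption1} combined with known pointwise and integral estimates for Dunkl translations of smooth compactly supported functions, with Theorem~\ref{teo:support} controlling the effective support of $\tau_{\mathbf x}K^{\{t\}}$. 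This is the principal technical obstacle: unlike in the classical case, $\tau_{\mathbf x}K^{\{t\}}(-\mathbf y)$ is not a function of $\mathbf x-\mathbf y$ but a signed superposition of contributions coming from the entire $G$-orbit of $\mathbf y$, so smoothness of $K$ in the space variable cannot be converted into a factor $\|\mathbf x-\mathbf y\|^{-1}$ by direct differentiation. The high regularity order $s_0>\mathbf N$ is calibrated to absorb the extra difference quotients produced by the reflections in~\eqref{eq:T_xi}.

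With the uniform $L^2$ bound and the kernel estimates in hand, a Calder\'on--Zygmund decomposition in the doubling space $(\mathbb R^N,\|\cdot\|,dw)$ gives weak-type $(1,1)$ for $f\mapsto f*K^{\{t\}}$ uniformly in $t$; Marcinkiewicz interpolation and duality then yield part~(i). For part~(ii), I would first prove pointwise existence of $\lim_{t\to 0}f*K^{\{t\}}(\mathbf x)$ on a dense subclass (say the Schwartz class) by splitting $\int f(\mathbf y)\tau_{\mathbf x}K^{\{t\}}(-\mathbf y)\,dw(\mathbf y)$ into a piece near $\mathbf x$, where a Taylor expansion of $f$ reduces convergence to~\eqref{eq:limitA}, and a far piece whose convergence is immediate from~\eqref{eq:assumption1}. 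Uniform boundedness of the truncations from part~(i), together with the associated maximal truncated operator, then extends both the limit and the bounds from the dense subclass to all of $L^p(dw)$, $1<p<\infty$, and to weak-type $(1,1)$.
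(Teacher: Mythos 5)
The paper does not prove this theorem: it is stated with the introductory sentence ``The following theorem was proved in~\cite{singular}'' and the parenthetical references point to Theorems~3.7, 4.1, 4.2 and~4.3 of that earlier work. So there is no in-paper proof to compare your argument against; your sketch has to be judged on its own as a reconstruction of the cited result.

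As such a reconstruction, it follows the Calder\'on--Zygmund blueprint one would expect: a uniform $L^2(dw)$ bound via Plancherel and a dyadic decomposition of $K^{\{t\}}$; size and H\"ormander-type regularity estimates for the kernel $\tau_{\mathbf x}K^{\{t\}}(-\mathbf y)$ phrased in terms of the orbit quasi-distance $d$; a Calder\'on--Zygmund decomposition on the doubling space $(\mathbb R^N,\|\cdot\|,dw)$ giving weak $(1,1)$; interpolation and duality for part (i); and, for part (ii), convergence on a dense class plus a maximal-truncated-operator estimate. You also correctly flag the genuinely Dunkl-specific obstacle, namely that $\tau_{\mathbf x}K^{\{t\}}(-\mathbf y)$ is not a function of $\mathbf x-\mathbf y$. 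A few of the details, however, are glossed over or slightly misattributed. The ``integration by parts against $E(\mathbf x,-i\xi)$'' in the $L^2$ step is carried out with the Dunkl operators $T_{e_j}$ (which satisfy $T_{e_j}E(\cdot,-i\xi)=-i\xi_j\,E(\cdot,-i\xi)$ and are skew-symmetric for $dw$), so each application produces, besides $\partial_{e_j}K$, the reflection-difference terms in~\eqref{eq:T_xi}; controlling these is part of the work and is one reason \eqref{eq:assumption1} must hold up to order $s_0$ rather than just order one. The cancellation~\eqref{eq:uni_on_annulus} is used to control the low-frequency contribution of the dyadic pieces, not ``the large annuli'' per se. The hypothesis $s_0>\mathbf N$ is not primarily about an $L^2$ geometric series: it is the regularity needed so that the Dunkl transform of each compactly supported $C^{s_0}$ dyadic piece decays fast enough (against the measure $w(\xi)\,d\xi\sim\|\xi\|^{\mathbf N-N}d\xi$) to yield pointwise bounds for its Dunkl translation, which is exactly what produces estimates of the form \eqref{eq:scaled_CZ1}--\eqref{eq:scaled_CZ2}. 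Finally, the Cotlar-type bound for the maximal truncated operator that you invoke to pass to the limit in (ii) is a genuine additional step, not a formal consequence of part (i). None of this indicates a fatal error in your outline, but it does show where the technical weight of an actual proof would have to fall; in the present paper all of that weight is offloaded to~\cite{singular} and~\cite{DzH_nonradial}.
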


\section{Statement of the results}

\subsection{Commutators}

In order to define the commutator operator, we come back to the definition of the limit operator $\mathbf{T}$. Let $0<\varepsilon<\min(1,s_0-\mathbf{N})$. For any $t>0$ let us denote $K^{\{t/2,t\}}:=K^{\{t/2\}}-K^{\{t\}}$.
Then $K^{\{t/2,t\}}$ is $C^{s_0}(\mathbb R^N)$-function supported by $B(0,t) \setminus B(0,t/4)$ (cf.~\cite[(3.1)]{singular}). In order to simplify the notation, we write $K_\ell=K^{\{2^{\ell-1},2^{\ell}\}}$. It is proved in~\cite[(4.24), (4.25)]{DzH_nonradial} that for all $\mathbf{x},\mathbf{y} \in \mathbb{R}^N$ and $\ell \in \mathbb{Z}$, we have

\begin{equation}\label{eq:scaled_CZ1}
    |K_{\ell}(\mathbf{x},\mathbf{y})| \leq {{C}}\left(1+\frac{\|\mathbf{x}-\mathbf{y}\|}{2^{\ell}}\right)^{-\varepsilon}w(B(\mathbf{x},2^{\ell}))^{-1/2}w(B(\mathbf{y},2^{\ell}))^{-1/2},
\end{equation}
\begin{equation}\label{eq:scaled_CZ2}
\begin{split}
    &|K_{\ell}(\mathbf{x},\mathbf{y})-K_{\ell}(\mathbf{x},\mathbf{y}')| \\&\leq {{C}}\frac{\|\mathbf{y}-\mathbf{y}'\|^{\varepsilon}}{2^{\varepsilon \ell}}\left(1+\frac{\|\mathbf{x}-\mathbf{y}\|}{2^{\ell}}\right)^{-\varepsilon}w(B(\mathbf{x},2^{\ell}))^{-1/2}\left(w(B(\mathbf{y},2^{\ell}))^{-1/2}+w(B(\mathbf{y}',2^{\ell}))^{-1/2}\right).
\end{split}
\end{equation}
Moreover, $K_{\ell}(\mathbf{x},\mathbf{y})=K_{\ell}( -\mathbf{y}, -\mathbf{x})$  and by~\cite[proof of Theorem 4.6]{DzH_nonradial},
\begin{equation}\label{eq:sum}
    \sum_{\ell=-\infty}^{\infty}|K_{\ell}(\mathbf{x},\mathbf{y})| \leq Cw(B(\mathbf x,d(\mathbf x,\mathbf y)))^{-1}\frac{d(\mathbf x,\mathbf y)^{\varepsilon}}{\| \mathbf x-\mathbf y\|^{\varepsilon}}
\end{equation}
for all $\mathbf{x},\mathbf{y} \in \mathbb{R}^N$ such that $d(\mathbf{x},\mathbf{y}) \neq 0$ and 
\begin{equation}\label{eq:sum_lip}
    \sum_{\ell=-\infty}^{\infty}|K_{\ell}(\mathbf{x},\mathbf{y})-K_{\ell}(\mathbf{x},\mathbf{y}')| \leq C\frac{\|\mathbf{y}-\mathbf{y}'\|^{\varepsilon}}{\| \mathbf x-\mathbf y\|^{\varepsilon}}w(B(\mathbf x,d(\mathbf x,\mathbf y)))^{-1}
\end{equation}
for all $\mathbf{x},\mathbf{y},\mathbf{y}' \in \mathbb{R}^N$ such that $\|\mathbf{y}-\mathbf{y}'\|<\frac{d(\mathbf{x},\mathbf{y})}{2}$. The number $\varepsilon>0$ will be fixed in the whole paper. 
Thanks to \eqref{eq:sum} the function 
\begin{equation}
    \label{eq:kernel_K}
    K(\mathbf x,\mathbf y)=\sum_{\ell=-\infty}^\infty K_\ell(\mathbf x,\mathbf y)
\end{equation}
is well defined for $d(\mathbf x,\mathbf y)>0$ and, by Theorem \ref{toe:from_singular}, it is the associated kernel to the operator $\mathbf T$, that is, 
\begin{equation}\label{eq:T-kenrel} \mathbf Tf(\mathbf x)=\int_{\mathbb R^N} K(\mathbf x,\mathbf y)f(\mathbf y)\, dw(\mathbf y)
\end{equation}
for $f\in L^p(dw)$ and $\mathbf x\notin \text{supp}\, f$. 

{Let us emphasize that the estimate for $K(\mathbf x,\mathbf y)$ which are consequences of~\eqref{eq:sum} and~\eqref{eq:sum_lip} turn out to be very useful in handling some harmonic analysis problems in the Dunkl setting (see~\cite{Tan}). }

From now on we fix a kernel  $K\in C^{s_0} (\mathbb R^N\setminus \{0\})$ satisfying~\eqref{eq:uni_on_annulus},~\eqref{eq:limitA}, and ~\eqref{eq:assumption1} for some $s_0>\mathbf N$.  Let  $b \in {\rm BMO}$.  For any  compactly supported $f \in L^{p}(dw)$ for some $p>1$, we  define
\begin{equation}\label{eq:comm}
\mathcal{C}f(\mathbf{x})={[b,\mathbf T]} f(\mathbf{x})=\lim_{m \to \infty}\int_{\mathbb{R}^N}(b(\mathbf{x})-b(\mathbf{y}))\sum_{\ell=-m}^{m}K_{\ell}(\mathbf{x},\mathbf{y})f(\mathbf{y})\,dw(\mathbf{y})=\lim_{m \to \infty}\mathcal{C}_mf(\mathbf{x}) 
\end{equation}
 The existence of the limit in~\eqref{eq:comm} in any $L^{p_0}(dw)$-norm, provided $1< p_0<p$, is proved in Lemma~\ref{lem:in_Lp0}. Then, Theorem~\ref{teo:bounded} and its proof  allows one to extend the definition for all $f \in L^p(dw)$. 
\subsection{Statements of main theorems}
Our main results are the following two theorems.

\begin{theorem}\label{teo:bounded}
Let $p>1$. Assume that a kernel  $K\in C^{s_0} (\mathbb R^N\setminus \{0\})$ satisfies~\eqref{eq:uni_on_annulus},~\eqref{eq:limitA},~\eqref{eq:assumption1} for a certain even integer $s_0>\mathbf N$, and $b \in {\rm BMO}$. Then there is a constant $C>0$ independent of $b$ such that for all $f \in L^p(dw)$ we have
\begin{equation}\label{eq:comm_main}
    \|\mathcal{C}f\|_{L^p(dw)} \leq C\|b\|_{{\rm BMO}}\|f\|_{L^p(dw)}.
\end{equation}
\end{theorem}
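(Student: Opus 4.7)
I would adapt the classical Fefferman--Stein sharp-maximal-function argument for commutators to the Dunkl setting, aiming for a pointwise estimate of the form
\[
M^{\#}(\mathcal{C}f)(\mathbf{x}_0)\;\lesssim\;\|b\|_{\BMO}\Bigl[\bigl(M(|\mathbf{T}f|^{s})(\mathbf{x}_0)\bigr)^{1/s}+\sum_{\sigma\in G}\bigl(M(|f|^{s'})(\sigma(\mathbf{x}_0))\bigr)^{1/s'}\Bigr]
\]
for some $1<s,s'<p$ close to $1$, where $M^\#$ and $M$ are the sharp and Hardy--Littlewood maximal functions with respect to Euclidean balls on the doubling space $(\mathbb{R}^N,\|\cdot\|,dw)$. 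Granting this, the Fefferman--Stein inequality, the $L^{p/\max(s,s')}$-boundedness of $M$, the $L^p$-bound of $\mathbf{T}$ from Theorem~\ref{toe:from_singular}\eqref{numitem:bounded}, and the $G$-invariance of $dw$ (which yields $\|M(|f|^{s'})(\sigma(\cdot))^{1/s'}\|_{L^p(dw)}=\|M(|f|^{s'})^{1/s'}\|_{L^p(dw)}$ for every $\sigma\in G$) combine to give \eqref{eq:comm_main}.

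To prove the pointwise estimate I would fix $\mathbf{x}_0$ and a ball $B=B(\tilde{\mathbf{x}},r)\ni\mathbf{x}_0$, use the identity $\mathcal{C}f=(b-b_B)\mathbf{T}f-\mathbf{T}((b-b_B)f)$, split $f=f\chi_{\mathcal{O}(4B)}+f\chi_{\mathcal{O}(4B)^c}=:f_1+f_2$, and normalise by $c_B:=-\mathbf{T}((b-b_B)f_2)(\tilde{\mathbf{x}})$. The resulting three pieces are the diagonal term $(b-b_B)\mathbf{T}f$, the orbit-local term $\mathbf{T}((b-b_B)f_1)$, and the orbit-nonlocal oscillation $\mathbf{T}((b-b_B)f_2)(\mathbf{x})-\mathbf{T}((b-b_B)f_2)(\tilde{\mathbf{x}})$. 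The diagonal term is handled by Hölder and John--Nirenberg on $(\mathbb{R}^N,\|\cdot\|,dw)$. For the orbit-nonlocal term, the inequality $d(\mathbf{x},\mathbf{y})\ge 3r>2\|\mathbf{x}-\tilde{\mathbf{x}}\|$, valid for $\mathbf{x}\in B$ and $\mathbf{y}\in\mathcal{O}(4B)^c$, allows use of the first-variable analogue of \eqref{eq:sum_lip} (obtained from $K_\ell(\mathbf{x},\mathbf{y})=K_\ell(-\mathbf{y},-\mathbf{x})$). Decomposing $\mathcal{O}(4B)^c$ dyadically in the orbit distance $d(\mathbf{x},\mathbf{y})\sim 2^k r$ ($k\ge 2$), using $G$-invariance of $dw$ to identify each orbit shell with a Euclidean annulus after summation over reflections, and applying John--Nirenberg on each ball $B(\sigma(\mathbf{x}_0),2^{k+1}r)$, produces a geometrically summable series in $2^{-k\varepsilon}$ bounded by $\|b\|_{\BMO}\sum_\sigma M(|f|^{s'})(\sigma(\mathbf{x}_0))^{1/s'}$.

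The hardest step is the orbit-local term $\mathbf{T}((b-b_B)f_1)$. The $L^s$-boundedness of $\mathbf{T}$ reduces it to controlling $\sum_{\sigma\in G}\int_{\sigma(4B)}|b-b_B|^s|f|^s\,dw$, and the splitting $b-b_B=(b-b_{\sigma(4B)})+(b_{\sigma(4B)}-b_B)$ treats the first summand cleanly by John--Nirenberg on $\sigma(4B)$ combined with $G$-invariance $w(\sigma(4B))=w(4B)$. The second summand is the crux: the factor $|b_{\sigma(4B)}-b_B|$ is \emph{not} controlled by $\|b\|_{\BMO}$ alone---it is precisely this control that $\BMO_d$ provides and that motivated the hypothesis in~\cite{HLLW}. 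To overcome this I would enlarge $c_B$ to absorb the quantities $\sum_\sigma(b_{\sigma(4B)}-b_B)\mathbf{T}(f\chi_{\sigma(4B)})(\tilde{\mathbf{x}})$, leaving the oscillations $(b_{\sigma(4B)}-b_B)[\mathbf{T}(f\chi_{\sigma(4B)})(\mathbf{x})-\mathbf{T}(f\chi_{\sigma(4B)})(\tilde{\mathbf{x}})]$, which I would bound by combining the first-variable kernel regularity with the standard BMO chaining estimate $|b_{\sigma(4B)}-b_B|\lesssim\|b\|_{\BMO}\log(2+\|\tilde{\mathbf{x}}-\sigma(\tilde{\mathbf{x}})\|/r)$. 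The algebraic $\varepsilon$-decay from the Hölder regularity of the kernel should dominate the logarithmic chaining loss, yielding a summable contribution. This logarithm-versus-Hölder trade-off is the mechanism that allows the Euclidean $\BMO$ norm to suffice in place of $\BMO_d$.
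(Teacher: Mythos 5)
Your overall strategy is the same as the paper's: a Fefferman--Stein sharp-maximal estimate, the decomposition of $f$ into a near piece, the $G$-translated near pieces, and the orbit-far piece, together with John--Nirenberg, the BMO chaining inequality $|b_{\sigma(B)}-b_B|\lesssim\|b\|_{\BMO}\log(2+\|\mathbf{x}_0-\sigma(\mathbf{x}_0)\|/r)$, and an algebraic Euclidean decay factor that dominates this logarithm. You also correctly identify the orbit-local term (pieces of $f$ supported near $\sigma(\mathbf{x}_0)$, $\sigma\neq\mathrm{id}$) as the novel difficulty: this is precisely the point where $\BMO$ suffices and $\BMO_d$ is not needed.

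However, the mechanism you propose for this crucial term does not work. You suggest bounding the oscillation $\mathbf{T}(f\chi_{\sigma(4B)})(\mathbf{x})-\mathbf{T}(f\chi_{\sigma(4B)})(\tilde{\mathbf{x}})$ via the first-variable H\"older regularity of the kernel (the analogue of \eqref{eq:sum_lip}). But that estimate requires $\|\mathbf{x}-\tilde{\mathbf{x}}\|<\tfrac12 d(\mathbf{y},\mathbf{z})$, and for $\mathbf{z}\in\sigma(4B)$ with $\mathbf{x},\tilde{\mathbf{x}}\in B$ the orbit distance $d(\mathbf{x},\mathbf{z})$ can be arbitrarily small — indeed it vanishes whenever $\mathbf{z}$ lies on the $G$-orbit of a point of $B$. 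So the kernel-regularity hypothesis fails and there is no smallness of the oscillation; the operator $\mathbf{T}(f\chi_{\sigma(4B)})$ restricted to $B$ really is a genuine singular integral, and subtracting the value at $\tilde{\mathbf{x}}$ buys nothing. The source of the algebraic decay that beats the logarithm is not the H\"older continuity of the kernel but its \emph{size} bound \eqref{eq:sum}, which already carries the ratio $\bigl(d(\mathbf{y},\mathbf{z})/\|\mathbf{y}-\mathbf{z}\|\bigr)^{\varepsilon}$ of orbit to Euclidean distance. Integrating that size bound over $\mathbf{y}\in B$ for fixed $\mathbf{z}$ near $\sigma(\mathbf{x}_0)$ yields
\[
\int_{B}\sum_{\ell}|K_{\ell}(\mathbf{z},\mathbf{y})|\,dw(\mathbf{y})\;\lesssim\;\frac{r^{\varepsilon}}{\bigl(r+\|\mathbf{x}_0-\sigma(\mathbf{x}_0)\|\bigr)^{\varepsilon}},
\]
(the paper's estimate \eqref{eq:R_on_Uj}), and this factor dominates the chaining logarithm. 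With this in hand you do not subtract any constant for these pieces: bound the mean oscillation by twice the mean absolute value, integrate in $\mathbf{y}$ first to pull out the factor above, and then run H\"older and John--Nirenberg on the $\sigma$-translated annulus. In practice the paper carries this out by splitting $f\chi_{\mathcal{O}(5B)}$ into $f\chi_{5B}$ (handled by $L^q$-boundedness of $\mathbf{T}$) and the $|G|-1$ pieces $f\chi_{U_j}$ near the reflected centres; the latter are exactly the terms where the size estimate \eqref{eq:sum}, not the regularity estimate \eqref{eq:sum_lip}, does the work.
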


In order to formulate our second theorem, recall that ${\rm VMO}$ is the closure in ${\rm BMO}$ of compactly supported Lipschitz functions, i.e., functions $f$ satisfying $ \sup_{\mathbf{x} \neq \mathbf{y}}\frac{|f(\mathbf{x})-f(\mathbf{y})|}{\|\mathbf{x}-\mathbf{y}\|}<\infty$.

\begin{theorem}\label{teo:compact} Assume $b \in {\rm VMO}$. For $1<p<\infty$, the commutator $\mathcal{C}$ is a compact operator on $L^p(dw)$.
\end{theorem}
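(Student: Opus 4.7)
The plan is to combine Theorem~\ref{teo:bounded} with the definition of ${\rm VMO}$ as the ${\rm BMO}$-closure of compactly supported Lipschitz functions. Given $b\in{\rm VMO}$, choose compactly supported Lipschitz $b_n$ with $\|b-b_n\|_{\BMO}\to 0$. Theorem~\ref{teo:bounded} then gives $\|\mathcal C - [b_n,\mathbf T]\|_{L^p\to L^p}\leq C\|b-b_n\|_{\BMO}\to 0$, so since the compact operators form a norm-closed subspace of $\mathcal B(L^p(dw))$, it suffices to prove compactness of $[b,\mathbf T]$ under the standing assumption that $b$ is Lipschitz with constant $\Lambda$ and $\supp b\subset B(0,R_0)$.

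For such a $b$, I would realize $\mathcal C=[b,\mathbf T]$ as a norm limit of integral operators with continuous, compactly supported kernels. Given $0<\delta<1<R$, split the kernel~\eqref{eq:kernel_K} of $\mathcal C$ according to the orbit distance as $\mathcal K=\mathcal K_{\rm near}+\mathcal K_{\rm mid}+\mathcal K_{\rm far}$, supported respectively in $\{d(\mathbf x,\mathbf y)<\delta\}$, $\{\delta\leq d(\mathbf x,\mathbf y)\leq R\}$, and $\{d(\mathbf x,\mathbf y)>R\}$. The middle operator $\mathcal C_{\rm mid}$ is compact on every $L^p(dw)$: the condition $b(\mathbf x)\neq b(\mathbf y)$ forces $\mathbf x$ or $\mathbf y$ into $\supp b$, after which $d(\mathbf x,\mathbf y)\leq R$ confines both variables to a bounded set, while on $\{d\geq\delta\}$ the series defining $K$ converges uniformly by~\eqref{eq:scaled_CZ1} to a continuous function; hence $\mathcal K_{\rm mid}$ is a bounded continuous kernel of compact support, which defines a compact operator on $L^p$ (uniform approximation by simple functions yields norm-convergent finite-rank operators).

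It remains to show that $\|\mathcal C_{\rm near}\|_{L^p\to L^p}\to 0$ as $\delta\to 0$ and $\|\mathcal C_{\rm far}\|_{L^p\to L^p}\to 0$ as $R\to\infty$. For the near piece, Lipschitz continuity $|b(\mathbf x)-b(\mathbf y)|\leq\Lambda\|\mathbf x-\mathbf y\|$ together with~\eqref{eq:sum} gives
$$|\mathcal K_{\rm near}(\mathbf x,\mathbf y)|\leq C\Lambda\,\frac{\|\mathbf x-\mathbf y\|^{1-\varepsilon}\,d(\mathbf x,\mathbf y)^\varepsilon}{w(B(\mathbf x,d(\mathbf x,\mathbf y)))}\,\chi_{\{d(\mathbf x,\mathbf y)<\delta\}}.$$
Outside the bounded set $\{\mathbf x\in B(0,R_0+\delta)\}$ (and symmetrically for $\mathbf y$) this kernel vanishes, and on it the decomposition $\{d<\delta\}=\bigcup_{\sigma\in G}\{\|\mathbf x-\sigma\mathbf y\|<\delta\}$, a change of variables $\mathbf z=\sigma\mathbf y$ (legitimate by $G$-invariance of $dw$), and annular partitioning controlled by the doubling property~\eqref{eq:growth} yield $\int|\mathcal K_{\rm near}(\mathbf x,\mathbf y)|\,dw(\mathbf y)=O(\delta^\varepsilon)$ uniformly in $\mathbf x$; the dual Schur bound follows from the symmetry $K_\ell(\mathbf x,\mathbf y)=K_\ell(-\mathbf y,-\mathbf x)$, so Schur's test gives $\|\mathcal C_{\rm near}\|_{L^p\to L^p}=O(\delta^\varepsilon)$. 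For the far piece, compactness of $\supp b$ forces at least one variable into $B(0,R_0)$, and on $\{d(\mathbf x,\mathbf y)>R\}$ the estimates~\eqref{eq:sum} and~\eqref{eq:growth} yield an analogous Schur estimate that vanishes as $R\to\infty$.

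The main obstacle is the near-diagonal bound: Lipschitz continuity is expressed in the Euclidean distance while the Calder\'on--Zygmund-type bound~\eqref{eq:sum} is natural in the orbit distance, so the residual factor $\|\mathbf x-\mathbf y\|^{1-\varepsilon}$ after combining them must be controlled via the Coxeter-group change of variables, exploiting that, thanks to the compact support of $b$, $\|\mathbf x-\sigma^{-1}\mathbf x\|$ stays uniformly bounded.
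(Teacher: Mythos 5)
Your opening reduction (density of compactly supported Lipschitz functions in $\VMO$, plus Theorem~\ref{teo:bounded} and norm-closedness of the compacts) is exactly the paper's first step. After that you diverge: you split the kernel by orbit distance into $d<\delta$, $\delta\le d\le R$, $d>R$, take the middle piece compact (bounded kernel with bounded support), and try to make the two tails small in operator norm, whereas the paper truncates in the dyadic scale $\ell$, shows $\|\mathcal C-\mathcal C_m\|_{L^p\to L^p}\to 0$ in Lemma~\ref{lem:limit}, and gets compactness of each $\mathcal C_m$ from equi-H\"older continuity and Arzel\'a--Ascoli. Your middle and near pieces are fine: the middle kernel is bounded and compactly supported (hence the operator is compact on $L^p$), and your near-diagonal Schur bound $O(\delta^\varepsilon)$ goes through once you partition $\{d<\delta\}$ according to which $\sigma\in G$ attains the minimum and change variables.

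However, the far piece as written has a genuine gap: a Schur test cannot make $\|\mathcal C_{\mathrm{far}}\|_{L^p\to L^p}$ small, because the relevant row integral diverges. Indeed, for $\mathbf x\in\supp b$ the kernel of $\mathcal C_{\mathrm{far}}$ contains the term $b(\mathbf x)K(\mathbf x,\mathbf y)\chi_{\{d(\mathbf x,\mathbf y)>R\}}$, and by~\eqref{eq:sum},
\begin{equation*}
\int_{\{d(\mathbf x,\mathbf y)>R\}}\frac{dw(\mathbf y)}{w(B(\mathbf x,d(\mathbf x,\mathbf y)))}
\;\gtrsim\;\sum_{j\ge 0}\int_{\{d(\mathbf x,\mathbf y)\sim 2^jR\}}\frac{dw(\mathbf y)}{w(B(\mathbf x,2^jR))}
\;\sim\;\sum_{j\ge 0}1\;=\;\infty,
\end{equation*}
so the Schur row bound (with weight $1$, and indeed with any reasonable weight) is infinite; this is just the familiar fact that the far truncation of a singular kernel is not absolutely integrable. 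The quantity that \emph{is} small is the $L^{p'}$-norm of the truncated kernel, $\bigl\|\chi_{\{d(\mathbf x,\cdot)>R\}}K(\mathbf x,\cdot)\bigr\|_{L^{p'}(dw)}\lesssim R^{-N/p}$ for $\mathbf x$ in a bounded set, and this together with the confinement of $\mathbf x$ (or $\mathbf y$) to $\supp b$ via H\"older gives the required $o(1)$ operator-norm bound. This is precisely the mechanism of~\eqref{eq:bx} and~\eqref{eq:by} in the paper's Lemma~\ref{lem:limit}; with that substitution your decomposition becomes a valid, somewhat different route to the same conclusion.
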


 To prove the theorems, we adapt the ideas of classical proofs (cf. e.g.~\cite{Grafakos}) to apply estimates for integral kernels of operators which are expressed in terms of the orbit distance  $d(\mathbf x,\mathbf y)$ and the Euclidean metric, see \eqref{eq:sum} and \eqref{eq:sum_lip} (cf. also~\cite{HLLW}). These  require in some places much  careful analysis. For example, we use the Coxeter group for decomposing  $L^p(dw)$-functions (see \eqref{eq:decomp_f}) or we split integration over $\mathbb R^N$ onto the Weyl chambers (see~\eqref{eq:spitGamma}). 
 
 \section{Proof of Theorem\texorpdfstring{~\ref{teo:bounded}}{3.1}}
 
Let us begin with the following lemma. Recall that $b_{B(\mathbf{x},r)}$ is defined by~\eqref{eq:mean}.

\begin{lemma}\label{lem:log}
There is a constant $C>0$  such that for all $b\in L^1_{{\rm loc}}(dw)$, $r_1>r>0$, $\mathbf{x},\mathbf{y} \in \mathbb{R}^N$, and $\sigma \in G$, we have:
\begin{equation}\label{eq:log_non}
    |b_{B(\mathbf{x},r)}-b_{B(\mathbf{x},r_1)}| \leq C\log(r_1/r)\|b\|_{{\rm BMO}},
\end{equation}

\begin{equation}\label{eq:not-far}
    |b_{B(\mathbf{x},r)}-b_{B(\mathbf{y},r)}| \leq C\|b\|_{{\rm BMO}} \quad \text{\rm provided } \|\mathbf x-\mathbf y\|\leq 2r,
\end{equation}

\begin{equation}\label{eq:log_sigma}
    |b_{B(\mathbf{x},r)}-b_{B(\sigma(\mathbf{x}),r)}| \leq C\log\left(\frac{\|\sigma(\mathbf{x})-\mathbf{x}\|}{r}+4\right)\|b\|_{{\rm BMO}},
\end{equation}
(John--Nirenberg inequality) for any $1\leq s<\infty$ there is $C_s>0$ such that for all positive integers $j$ we have
\begin{equation}\label{eq:John}
   \Big(\frac{1}{w(B(\mathbf x,2^jr))} \int_{B(\mathbf x,2^jr)}|b(\mathbf y)-b_{B(\mathbf x,r)}|^s\, dw(\mathbf y) \Big)^{1/s}\leq C_s j \|b\|_{\rm BMO}.
\end{equation}
\end{lemma}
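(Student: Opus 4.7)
The plan is to deduce all four estimates from the standard telescoping technique for $\BMO$ averages, combined with the doubling property~\eqref{eq:growth} of $dw$ and the classical John--Nirenberg inequality, which holds on any space of homogeneous type such as $(\mathbb{R}^N,\|\cdot\|,dw)$. Nothing in the argument exploits the reflection structure beyond the fact that $\sigma(\mathbf{x})$ is just another point in $\mathbb{R}^N$.

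For~\eqref{eq:log_non}, I choose the unique integer $k\geq 0$ with $2^{k-1}r\leq r_1\leq 2^k r$ and telescope
\begin{equation*}
b_{B(\mathbf{x},r)}-b_{B(\mathbf{x},r_1)}=\sum_{j=0}^{k-1}\bigl(b_{B(\mathbf{x},2^j r)}-b_{B(\mathbf{x},2^{j+1}r)}\bigr)+\bigl(b_{B(\mathbf{x},2^k r)}-b_{B(\mathbf{x},r_1)}\bigr).
\end{equation*}
For any two balls $B(\mathbf{x},\rho_1)\subset B(\mathbf{x},\rho_2)$ with $\rho_2\leq 2\rho_1$, doubling gives $w(B(\mathbf{x},\rho_2))\lesssim w(B(\mathbf{x},\rho_1))$, hence
\begin{equation*}
|b_{B(\mathbf{x},\rho_1)}-b_{B(\mathbf{x},\rho_2)}|\leq \frac{1}{w(B(\mathbf{x},\rho_1))}\int_{B(\mathbf{x},\rho_2)}|b(\mathbf{y})-b_{B(\mathbf{x},\rho_2)}|\,dw(\mathbf{y})\leq C\|b\|_{\BMO},
\end{equation*}
and summing $k+1\lesssim \log(r_1/r)+1$ such contributions gives the claim.

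For~\eqref{eq:not-far}, the inclusion $B(\mathbf{y},r)\subset B(\mathbf{x},4r)$ together with doubling gives $w(B(\mathbf{x},4r))\lesssim w(B(\mathbf{y},r))$, and then
\begin{equation*}
|b_{B(\mathbf{x},r)}-b_{B(\mathbf{y},r)}|\leq |b_{B(\mathbf{x},r)}-b_{B(\mathbf{x},4r)}|+|b_{B(\mathbf{x},4r)}-b_{B(\mathbf{y},r)}|\leq C\|b\|_{\BMO},
\end{equation*}
the first term handled by~\eqref{eq:log_non} and the second by the same doubling trick as above. For~\eqref{eq:log_sigma}, let $R:=\|\sigma(\mathbf{x})-\mathbf{x}\|$; if $R\leq 2r$ I appeal directly to~\eqref{eq:not-far}, and otherwise I enclose both $B(\mathbf{x},r)$ and $B(\sigma(\mathbf{x}),r)$ in the common ball $B(\mathbf{x},R+r)$. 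Two applications of~\eqref{eq:log_non}, moving from $B(\mathbf{x},r)$ and from $B(\sigma(\mathbf{x}),r)$ up to their respective balls of radius $R+r$, cost $C\log((R+r)/r)\|b\|_{\BMO}$ each, while a single application of~\eqref{eq:not-far} with $r$ replaced by $R+r$ handles the center shift and costs $C\|b\|_{\BMO}$. Summing yields the required $C\log(R/r+4)\|b\|_{\BMO}$ bound.

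Finally,~\eqref{eq:John} follows from the triangle inequality in $L^s(dw)$: writing $B_j:=B(\mathbf{x},2^j r)$ for brevity, one has $|b(\mathbf{y})-b_{B(\mathbf{x},r)}|\leq |b(\mathbf{y})-b_{B_j}|+|b_{B_j}-b_{B(\mathbf{x},r)}|$, so taking the $L^s(dw/w(B_j))$ norm over $B_j$ produces two contributions. The first is bounded by $C_s\|b\|_{\BMO}$ by the classical John--Nirenberg inequality on the space of homogeneous type $(\mathbb{R}^N,\|\cdot\|,dw)$, and the second equals $|b_{B_j}-b_{B(\mathbf{x},r)}|\leq Cj\|b\|_{\BMO}$ by~\eqref{eq:log_non}. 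None of the four estimates is genuinely hard; the only bookkeeping point is in~\eqref{eq:log_sigma}, where one must use a common enclosing ball rather than trying to telescope along a ``path'' from $\mathbf{x}$ to $\sigma(\mathbf{x})$, in order to keep the logarithmic factor sharp.
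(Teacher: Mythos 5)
Your proof is correct and follows the same route as the paper: it reduces~\eqref{eq:log_non},~\eqref{eq:not-far}, and~\eqref{eq:John} to standard telescoping plus the classical John--Nirenberg inequality on a doubling space (which the paper handles by citation), and proves~\eqref{eq:log_sigma} by the same three-term triangle inequality through a common enclosing ball, with~\eqref{eq:log_non} applied on each side and~\eqref{eq:not-far} applied at the large scale. The only cosmetic difference is that you use the enclosing radius $R+r$ directly where the paper dyadically rounds to $2^{j+2}r$; the estimates are identical.
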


\begin{proof}
Inequalities~\eqref{eq:log_non}, \eqref{eq:not-far}, and~\eqref{eq:John} are consequences of the well-known results in metric spaces with doubling measures (see e.g.~\cite{Buckley},~\cite{CW}, or~\cite[Section 4]{JL} in the Dunkl setting). In order to prove~\eqref{eq:log_sigma}, note that if $\|\mathbf{x}-\sigma(\mathbf{x})\| \leq 2r$, then~\eqref{eq:log_sigma} follows by~\eqref{eq:not-far}. Assume that $\|\mathbf{x}-\sigma(\mathbf{x})\|>2r$ and let $j$ be the smallest positive integer such that $\|\mathbf{x}-\sigma(\mathbf{x})\| \leq 2^j r$. Then $B(\mathbf{x},2^{j+2}r) \cap  B(\sigma(\mathbf{x}),2^{j+2}r)\ne \emptyset$.
So, applying~\eqref{eq:log_non} and~\eqref{eq:not-far}, we get 
\begin{equation*}
    \begin{split}
        &|b_{B(\mathbf x,r)}-b_{B(\sigma(\mathbf x), r)}|\\
        &\leq  |b_{B(\mathbf x,r)}-b_{B(\mathbf x, 2^{j+2}r)}|+  |b_{B(\mathbf x,2^{j+2}r)}-b_{B(\sigma(\mathbf x), 2^{j+2}r)}|+ |b_{B(\sigma(\mathbf x),2^{j+2}r)}-b_{B(\sigma(\mathbf x), r)}|\\
       & \leq Cj\|b\|_{\rm BMO}\leq C' \log\left(\frac{\|\mathbf{x}-\sigma(\mathbf{x})\|}{r}+4\right)\|b\|_{{\rm BMO}}.
    \end{split}
\end{equation*}

\end{proof}

 \begin{lemma}\label{lem:in_Lp0}
 Let $p>p_0>1$. If $f \in L^{p}(dw)$ is compactly supported, then $\mathcal{C}_m f \in L^{p_0}(dw)$ for all $m \in \mathbb{Z}$ and
 \begin{align*}
     \lim_{m \to \infty}\mathcal{C}_mf=\mathcal{C}f \text{ in }L^{p_0}(dw).
 \end{align*}
 \end{lemma}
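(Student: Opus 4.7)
The plan is to establish (a) $\mathcal{C}_m f \in L^{p_0}(dw)$ for every $m$, and (b) that $(\mathcal{C}_m f)_m$ is Cauchy in $L^{p_0}(dw)$. Throughout fix $R>0$ with $\mathrm{supp}\,f \subseteq B(0,R)$, and recall that $b\in L^{r}_{\mathrm{loc}}(dw)$ for every $r<\infty$ by John--Nirenberg, so $bf\in L^{p_0}(dw)$ via H\"older.

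For (a), telescoping gives $\sum_{\ell=-m}^{m} K_\ell = K^{\{2^{-m-1}\}} - K^{\{2^m\}} =: \widetilde K_m$, a bounded compactly supported function in the annulus $\{2^{-m-2}\le \|\mathbf x\|\le 2^m\}$, so $\mathcal{C}_m f = b\cdot(f*\widetilde K_m) - (bf)*\widetilde K_m$. Theorem~\ref{teo:support} gives $\mathrm{supp}\,(f*\widetilde K_m) \subseteq \mathcal O(B(0, R+2^m))$, a bounded set; H\"older's inequality with $1/p_0 = 1/r + 1/p$ ($r<\infty$) controls the first summand via $b\in L^r$ on this set, while the second summand lies in $L^{p_0}(dw)$ by Theorem~\ref{toe:from_singular}(i).

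For (b), fix $m_1<m_2$. The telescoping identities
\[
\sum_{-m_2\le\ell<-m_1}K_\ell = K^{\{2^{-m_2-1}\}} - K^{\{2^{-m_1-1}\}} =: h_{\mathrm{in}},\qquad \sum_{m_1<\ell\le m_2}K_\ell = K^{\{2^{m_1}\}} - K^{\{2^{m_2}\}}
\]
split $\mathcal{C}_{m_2}f-\mathcal{C}_{m_1}f$ into an \emph{inner} and an \emph{outer} part. The inner part is $b\cdot(f*h_{\mathrm{in}}) - (bf)*h_{\mathrm{in}}$; since $h_{\mathrm{in}}$ is supported in $B(0,2^{-m_1-1})$, $f*h_{\mathrm{in}}$ is supported in the fixed set $\mathcal O(B(0, R+1))$ for $m_1\ge 0$. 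Theorem~\ref{toe:from_singular}(ii) gives $f*K^{\{t\}}\to \mathbf T f$ in $L^p(dw)$ and $(bf)*K^{\{t\}}\to \mathbf T(bf)$ in $L^{p_0}(dw)$ as $t\to 0$, so $\|f*h_{\mathrm{in}}\|_{L^p(dw)}\to 0$ and $\|(bf)*h_{\mathrm{in}}\|_{L^{p_0}(dw)}\to 0$. H\"older with $b\in L^r$ on $\mathcal O(B(0,R+1))$ then makes $\|b\cdot(f*h_{\mathrm{in}})\|_{L^{p_0}(dw)}\to 0$, so the inner part tends to $0$ in $L^{p_0}(dw)$.

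The outer part $\sum_{m_1<\ell\le m_2} I_\ell$, where $I_\ell(\mathbf x) = \int (b(\mathbf x)-b(\mathbf y))K_\ell(\mathbf x,\mathbf y)f(\mathbf y)\,dw(\mathbf y)$, is the main technical step: the direct bound on $\|b\cdot(f*K^{\{2^{m_1}\}})\|_{L^{p_0}(dw)}$ fails because the convolution is supported on a set that grows with $m_1$ while $b$ is only locally integrable. The remedy is to estimate each $I_\ell$ separately. For $\ell$ with $2^\ell>R$, Theorem~\ref{teo:support} gives $\mathrm{supp}\,(f*K_\ell) \subseteq B(0, 2^{\ell+1})$, and on the region $d(\mathbf x,\mathbf y)\le 2^\ell$ where $K_\ell$ lives the estimate \eqref{eq:scaled_CZ1} together with the doubling comparison $w(B(\mathbf x,2^\ell))\asymp w(B(\mathbf y,2^\ell))$ yields $|K_\ell(\mathbf x,\mathbf y)|\le C w(B(\mathbf x,2^\ell))^{-1}$. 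Writing $I_\ell = (b-c_\ell)\cdot(f*K_\ell) - ((b-c_\ell)f)*K_\ell$ with $c_\ell := b_{B(0,2^{\ell+1})}$, covering $B(0, 2^{\ell+1})$ by boundedly many balls of radius $2^\ell$, and invoking \eqref{eq:John}, \eqref{eq:not-far}, and \eqref{eq:log_non}, one obtains
\[
\|I_\ell\|_{L^{p_0}(dw)} \le C(1+\ell)\|b\|_{\BMO}\|f\|_{L^p(dw)}\,w(B(0,2^\ell))^{-(p_0-1)/p_0}.
\]
By \eqref{eq:growth} the right-hand side decays geometrically in $\ell$, so $\sum_{\ell>m_1}\|I_\ell\|_{L^{p_0}(dw)} \to 0$ as $m_1\to\infty$. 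Combined with the inner estimate, $(\mathcal{C}_m f)_m$ is Cauchy in $L^{p_0}(dw)$ and $\mathcal{C} f$ is defined as its limit.
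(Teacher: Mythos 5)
Your proof is correct, and it follows the same broad strategy as the paper — decompose $\mathcal{C}_m f$ into a multiplication piece and a piece of the form (operator applied to $(b-c)f$), then use the John--Nirenberg inequality and the $L^q(dw)$-boundedness of the truncated singular integrals from Theorem~\ref{toe:from_singular} — but you implement the key technical step differently. The paper fixes a single constant $c=b_B$ with $B\supseteq\operatorname{supp} f$, studies the limit operator $\mathbf T$ directly, and controls $(b-b_B)\mathbf Tf$ on $(5B)^c$ via the \emph{summed} kernel bound \eqref{eq:sum} together with a dyadic decomposition of $(5B)^c$ and \eqref{eq:John}; the fact that $\mathcal{C}_m f$ actually converges to $\mathcal Cf$ in $L^{p_0}(dw)$ is then essentially deferred to the $L^{p_0}$-convergence statement in Theorem~\ref{toe:from_singular}\eqref{numitem:bounded} without addressing the term $(b-b_B)(T_mf-\mathbf Tf)$ explicitly. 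You instead exploit the telescoping structure $\sum_\ell K_\ell=K^{\{2^{-m-1}\}}-K^{\{2^m\}}$ to split the difference $\mathcal{C}_{m_2}f-\mathcal{C}_{m_1}f$ into an inner and an outer block, and on the outer block you work scale-by-scale with a \emph{scale-adapted} constant $c_\ell=b_{B(0,2^{\ell+1})}$ and the single-scale bound \eqref{eq:scaled_CZ1}, obtaining a geometrically summable estimate for $\|I_\ell\|_{L^{p_0}(dw)}$. This buys you two things: it makes the Cauchy property of $(\mathcal C_m f)_m$ in $L^{p_0}(dw)$ completely explicit (closing the gap the paper leaves implicit), and it avoids using the global estimate \eqref{eq:sum}, relying only on the single-scale bounds. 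The cost is a slightly longer argument; the logarithmic loss $(1+\ell)$ you pick up from \eqref{eq:log_non} plays exactly the role that the factor $j$ from \eqref{eq:John} plays in the paper's dyadic decomposition, and both are absorbed by the geometric decay coming from \eqref{eq:growth}.
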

 
 \begin{proof}
 Let  $B=B(0,r)$ be any ball such that ${\rm supp \;}f \subseteq B$. We write
 \begin{align*}
     \mathcal{C}_m f(\mathbf{x})=(b(\mathbf{x})-b_B)\sum_{\ell=-m}^{m}K_{\ell}*f(\mathbf{x})+\sum_{\ell=-m}^{m}K_{\ell}*((b-b_B)f))(\mathbf{x}).
 \end{align*}
 By H\"older's inequality with $s=p/p_0$ and the John-Nirenberg inequality~\eqref{eq:John} we obtain  
 \begin{align*}
     &\left(\int_{B}|b(\mathbf{x})-b_B|^{p_0}|f(\mathbf{x})|^{p_0}\;dw(\mathbf{x})\right)^{1/p_0} \leq Cw(B)^{1/(p_0s')}\|b\|_{\BMO}\|f\|_{L^p(dw)},
 \end{align*}
 where $s'>1$ is such that $s+s'=ss'$, so $(b-b_B)f \in L^{p_0}(dw)$. Hence, by Theorem~\ref{toe:from_singular}~\eqref{numitem:bounded}, 
  $$ \lim_{m \to \infty}\sum_{\ell=-m}^{m}K_{\ell}*((b-b_B)f))$$  exists in the $L^{p_0}(dw)$-norm and it is equal to $\mathbf{T}((b-b_B)f) \in L^{p_0}(dw)$. Again, by  Theorem~\ref{toe:from_singular}~\eqref{numitem:bounded}, the limit 
 $$ \lim_{m \to \infty}\sum_{\ell=-m}^{m}K_{\ell}*f$$ 
 exists in the $L^{p_0}(dw)$ and $L^p(dw)$-norms and it is equal to $\mathbf{T}f$. We write
 \begin{align*}
  (b(\mathbf{x})-b_{B})\mathbf{T}f(\mathbf{x})= (b(\mathbf{x})-b_{B})\mathbf{T}f(\mathbf{x})\chi_{5B}(\mathbf{x})+ (b(\mathbf{x})-b_{B})\mathbf{T}f(\mathbf{x})\chi_{(5B)^c}(\mathbf{x})=:g_{1}(\mathbf{x})+g_{2}(\mathbf{x}).   
 \end{align*}
 We will show that $g_1,g_2 \in L^{p_0}(dw)$. For $g_1$, by H\"older's inequality with $s=p/p_0$, the John-Nirenberg inequality~\eqref{eq:John}, and $L^p(dw)$-boundedness of $\mathbf{T}$ (Theorem~\ref{toe:from_singular}~\eqref{numitem:bounded}), we get
 \begin{align*}
     &\left(\int_{5B}|b(\mathbf{x})-b_B|^{p_0}|\mathbf{T}f(\mathbf{x})|^{p_0}\;dw(\mathbf{x})\right)^{1/p_0}\\& \leq C\left(\int_{5B}|b(\mathbf{x})-b_B|^{p_0s'}\,dw(\mathbf{x})\right)^{1/(p_0s')}\left(\int_{5B}|\mathbf Tf(\mathbf x)|^{p}\;dw(\mathbf{x})\right)^{1/p}\\&\leq Cw(B)^{1/(p_0s')}\|b\|_{\BMO}\|f\|_{L^p(dw)}.
 \end{align*}
In order to prove that $g_2 \in L^{p_0}(dw)$, we observe that, by~\eqref{eq:t_ball} and~\eqref{eq:balls_asymp}, $w(B(\mathbf x,d(\mathbf x,\mathbf y)))\sim w(B(0,\|\mathbf x\|))=c\|\mathbf x\|^{\mathbf N}$ for $\mathbf{y} \in B$ and $\mathbf{x} \in (5B)^c$. Using~\eqref{eq:sum} we have 
 \begin{equation}
 \begin{split}
   \chi_{ (5B)^c}(\mathbf x)| \mathbf Tf(\mathbf x)|
   & \leq C\chi_{ (5B)^c}(\mathbf x)\int_{B}\frac{|f(\mathbf x)|}{w(B(\mathbf x, d(\mathbf x,\mathbf y)))}\, dw(\mathbf y)\\
   &\leq C\int_{B}\frac{|f(\mathbf x)|}{w(B(0, \|\mathbf x\|)))}\, dw(\mathbf y)\leq C\|f\|_{L^p(dw)}\frac{w(B)^{1/p'}}{w(B(0,\|\mathbf x\|))}.\\
 \end{split}
 \end{equation}
Consequently, 
\begin{equation}
    \begin{split}
        \|g_2\|_{L^{p_0}(dw)}^{p_0}
        &\leq C \| f\|_{L^p(dw)}^{p_0}w(B)^{p_0/p'}\int_{\|\mathbf x\|>5r}\frac{|b(\mathbf x)-b_B|^{p_0}}{w(B(0,\|\mathbf x\|))^{p_0}}\, dw(\mathbf x)\\
        &\leq C \| f\|_{L^p(dw)}^{p_0}w(B)^{p_0/p'}\sum_{j=1}^\infty \int_{\|\mathbf x\|\leq 2^jr}\frac{|b(\mathbf x)-b_B|^{p_0}}{w(B(0,2^{j}r))^{p_0}}\, dw(\mathbf x).
    \end{split}
\end{equation}
 Now, applying the John-Nirenberg inequality~\eqref{eq:John} we get 
 \begin{equation}
    \begin{split}
        \|g_2\|_{L^{p_0}(dw)}^{p_0}
        &\leq C \| f\|_{L^p(dw)}^{p_0}w(B)^{p_0/p'}\sum_{j=1}^\infty \| b\|_{\BMO}^{p_0}\frac{j^{p_0}}{(2^jr)^{(p_0-1)\mathbf N}}\\
        &\leq C' \| f\|_{L^p(dw)}^{p_0}w(B)^{p_0/p'}\| b\|_{\BMO}^{p_0} r^{-(p_0-1)\mathbf N}. \\       
    \end{split}
\end{equation}
 \end{proof}

\begin{proof}[Proof of Theorem\texorpdfstring{~\ref{teo:bounded}}{ 3.1}]
We shall prove the inequality~\eqref{eq:comm_main} for compactly supported functions $f\in L^p(dw)$ which form a dense subspace in $L^p(dw)$. To this end, thanks to Lemma~\ref{lem:in_Lp0} and~\cite[page 148, Theorem 2 and the remark below it]{St2}, it suffices to verify that 
\begin{equation}\label{eq:sharp}
     \|(\mathcal{C}f)^{\#}\|_{L^p(dw)}\leq C\| b\|_{\BMO} \| f\|_{L^p(dw)},
\end{equation}
where $g^{\#}$ denotes the sharp maximal function 
$$ g^{\#}(\mathbf x)=\sup_{B\ni \mathbf x} \inf_{c\in \mathbb C} \frac{1}{w(B)} \int_{B} |g(\mathbf y)-c|\, dw(\mathbf y).$$
  
Let $\mathbf{x}\in \mathbb R^N$ and let $B=B(\mathbf{x}_0,r)$ be any ball which contains $\mathbf{x}$. We write all the elements of $G\setminus\{{\rm id}\}$ in a sequence  $\sigma_1,\sigma_2,\ldots,\sigma_{|G|-1}$. We define the sets $U_j \subseteq \mathbb{R}^N$, $j=1,2,\ldots,|G|-1$, inductively:
\begin{equation*}
\begin{split}
   & U_1:=\{\mathbf{z}\in \mathbb{R}^N\;:\; \| \mathbf{z}-\mathbf{x}_0\|>5r, \ \|\mathbf{z}-\sigma_1(\mathbf{x}_0)\|\leq 5r\},\\
    &U_{j+1}:=\{\mathbf{z}\in \mathbb{R}^N\;:\; \| \mathbf{z}-\mathbf{x}_0\|>5r,\ \|\mathbf{z}-\sigma_{j+1}(\mathbf{x}_0)\|\leq 5r\}\setminus \left(\bigcup_{j_1=1}^jU_{j_1}\right) \text{ for }j \geq 1.
\end{split}
\end{equation*}  
For a compactly supported  function $f\in L^p(dw)$,  we decompose 
\begin{equation}\label{eq:decomp_f}
    f=f_1+f_2+\sum_{j=1}^{|G|-1} f_{\sigma_j}, \text{ where } f_1:=f\cdot \chi_{5B}, \quad f_2:=f\cdot\chi_{\mathcal (\mathcal{O}(5B))^c}, \quad f_{\sigma_j} :=f\cdot\chi_{U_j}.
\end{equation}
For $\mathbf{y}\in B$ we set
 \begin{equation*}
 \begin{split}
     &g_1(\mathbf{y}):=\mathcal{C}f_1(\mathbf{y})=(b(\mathbf{y})-b_B)\mathbf{T}f_1(\mathbf{y})+\mathbf{T}((b_B-b)f_1)(\mathbf{y})=:g_{11}(\mathbf{y})+g_{12}(\mathbf{y}),\\
      &g_2(\mathbf{y}):=\mathcal{C}f_2(\mathbf{y})=(b(\mathbf{y})-b_B)\mathbf{T}f_2(\mathbf{y})+\mathbf{T}((b_B-b)f_2)(\mathbf{y})=:g_{21}(\mathbf{y})+g_{22}(\mathbf{y}),\\
      &g_{\sigma_j}(\mathbf{y}):=\mathcal{C}f_{\sigma_j}(\mathbf{y})=(b(\mathbf{y})-b_B)\mathbf{T}f_{\sigma_{j}}(\mathbf{y})+\mathbf{T}((b_B-b)f_{\sigma_j})(\mathbf{y})=:g_{\sigma_j1}(\mathbf{y})+g_{\sigma_j2}(\mathbf{y}).\\
     \end{split}
 \end{equation*}
 Fix $1<s<p$. Further, by the fact that $|(g_{11})_B|\leq \frac{1}{w(B)}\int_B |g_{11}(\mathbf{y})|\,dw(\mathbf{y})$, by the definition of $g_{11}$, and H\"older's inequality,
 \begin{equation*}
     \begin{split}
         \frac{1}{w(B)} & \int_B |g_{11}(\mathbf{y})-(g_{11})_B|\;dw(\mathbf{y})\leq \frac{2}{w(B)}\int_B |g_{11}(\mathbf{y})|\, dw(\mathbf{y})\\
         &\leq 2\frac{1}{w(B)}\int_B |b(\mathbf{y})-b_B| \cdot |\mathbf{T}f_1(\mathbf{y})|\, dw(\mathbf{y})\\
         & \leq 2 \Big(\frac{1}{w(B)}\int_B |b(\mathbf{y})-b_B|^{s'}dw(\mathbf{y})\Big)^{1/s'} \cdot \Big(\frac{1}{w(B)}\int_B|\mathbf{T}f_1(\mathbf{y})|^s\, dw(\mathbf{y})\Big)^{1/s},
     \end{split}
 \end{equation*}
where $s'>1$ is such that $\frac{1}{s}+\frac{1}{s'}=1$. Applying inequality~\eqref{eq:John} we conclude that 
 \begin{equation*}
     g_{11}^{\#} (\mathbf{x})\leq C\|b\|_{\BMO} (M(|\mathbf{T}f_1|^s)(\mathbf{x}))^{1/s}.
 \end{equation*}
 The same analysis gives
 \begin{equation*}
 \begin{split}
     &g_{21}^{\#} (\mathbf{x})\leq C\|b\|_{\BMO} (M(|\mathbf{T}f_2|^s)(\mathbf{x}))^{1/s}, \ \ \ g_{\sigma_j1}^{\#} (\mathbf{x})\leq C\|b\|_{\BMO} (M(|\mathbf{T}f_{\sigma_j}|^s)(\mathbf{x}))^{1/s}.\\
     \end{split}
 \end{equation*}
 To deal with $g_{12}$, we choose $q,v\in (1,\infty)$ such that $1<qv<p<\infty$ and  $s =qv$. Then, by H\"older's inequality and $L^{q}(dw)$-boundedness of $\mathbf{T}$,
 \begin{equation*}
\begin{split}
         \frac{1}{w(B)} & \int_B |g_{12}(\mathbf{y})-(g_{12})_B|\;dw(\mathbf{y})\leq \frac{2}{w(B)}\int_B |g_{12}(\mathbf{y})|\, dw(\mathbf{y})\\
         &\leq 2\frac{1}{w(B)}\int_B |\mathbf{T}((b(\cdot)-b_B) \cdot f_1)(\mathbf{y})|\, dw(\mathbf{y})\\
         &\leq 2 \Big(\frac{1}{w(B)}\int_B \mathbf{T}((b(\cdot)-b_B) \cdot f_1)(\mathbf{y})|^q\, dw(\mathbf{y})\Big)^{1/q} \\
         &\leq C \Big(\frac{1}{w(5B)}\int_{5B} |(b(\mathbf{y})-b_B) \cdot f_1(\mathbf{y})|^q\, dw(\mathbf{y})\Big)^{1/q} \\
         & \leq C\Big(\frac{1}{w(5B)}\int_{5B} |b(\mathbf{y})-b_B|^{qv'}dw(\mathbf{y})\Big)^{1/(qv')} \cdot \Big(\frac{1}{w(5B)}\int_{5B}|f_1(\mathbf{y})|^{qv}\, dw(\mathbf{y})\Big)^{1/(qv)}.
     \end{split}
 \end{equation*}
 Hence, applying the John-Nirenberg inequality~\eqref{eq:John}, we get 
  \begin{equation*}
     g_{12}^{\#} (\mathbf{x})\leq C\|b\|_{\BMO} (M(|f_1|^s)(\mathbf{x}))^{1/s }.
 \end{equation*}
We turn to analyse $g_{22}$. Observe that  for $\mathbf{z}\notin \mathcal O(5B)$ and $\mathbf{y} \in B$ we have  $\| \mathbf{x}_0-\mathbf{y}\|\leq d(\mathbf{x}_0,\mathbf{z})$. 
 Let $\Gamma$ be a fixed closed  Weyl chamber such that $\mathbf{x}_0\in \Gamma$, then by the estimates~\eqref{eq:sum_lip},
 \begin{equation}\label{eq:spitGamma}
     \begin{split}
         |g_{22}(\mathbf{y})-g_{22}(\mathbf{x}_0)|&\leq \int_{\mathbb{R}^N}  \sum_{\ell=-\infty}^{\infty}|K_{\ell}(\mathbf{z},\mathbf{y})-K_{\ell}(\mathbf{z},\mathbf{x}_0)||b_B-b(\mathbf{z})||f_2(\mathbf{z})|\, dw(\mathbf{z})\\
         &=\sum_{\sigma\in G}\int_{\mathbf{z}\in \sigma (\Gamma)}  \sum_{\ell=-\infty}^{\infty}|K_{\ell}(\mathbf{z},\mathbf{y})-K_{\ell}(\mathbf{z},\mathbf{x}_0)||b_B-b(\mathbf{z})||f_2(\mathbf{z})|\, dw(\mathbf{z})\\
         &\leq C\sum_{\sigma\in G}\int_{\mathbf{z}\in \sigma (\Gamma)} \frac{\|\mathbf{y}-\mathbf{x}_0\|^{\varepsilon}}{\|\mathbf{x}_0-\mathbf{z}\|^{\varepsilon}} \frac{1}{w(B(\mathbf{x}_0,d(\mathbf{x}_0,\mathbf{z}))}|b_B-b(\mathbf{z})||f_2(\mathbf{z})|\, dw(\mathbf{z})\\
         &=:\sum_{\sigma\in G} J_\sigma(\mathbf{y}).\\
     \end{split}
 \end{equation}
In dealing with  $J_\sigma(\mathbf{y})$ we shall use the inequalities:
\begin{equation*}
    \begin{split}
       & \|\mathbf{x}_0-\mathbf{z}\|\geq \max(\|\mathbf{x}_0-\sigma(\mathbf{x}_0)\|/2,r)\quad \text{for } \mathbf z\in \sigma (\Gamma),\\
      & \| \sigma(\mathbf x_0)-\sigma (\mathbf x)\|\leq r<5r \leq \|\sigma (\mathbf x_0)-\mathbf z\| =d(\mathbf x_0,\mathbf z)\leq \|\mathbf x_0-\mathbf z\| \quad \text{for } \mathbf z\in \sigma (\Gamma), \ \mathbf z\notin \mathcal O(5B).
    \end{split}
\end{equation*}
So,  
 \begin{equation*}
     \begin{split}
         &J_\sigma(\mathbf{y})\leq C\int_{\mathbf{z}\in \sigma(\Gamma)} \frac{r^{\varepsilon}}{\| \mathbf{x}_0-\mathbf{z}\|^{\varepsilon}} \frac{1}{w(B(\sigma(\mathbf{x}_0), \| \sigma(\mathbf{x}_0)-\mathbf{z}\|))} |b_B-b_{\sigma(B)}||f_2(\mathbf{z})|\, dw(\mathbf{z})\\
         & \ \ + C\int_{\mathbf{z}\in \sigma(\Gamma)} \frac{r^{\varepsilon}}{\| \mathbf{x}_0-\mathbf{z}\|^{\varepsilon}} \frac{1}{w(B(\sigma(\mathbf{x}_0), \| \sigma(\mathbf{x}_0)-\mathbf{z}\|))} |b_{\sigma(B)}-b(\mathbf{z})||f_2(\mathbf{z})|\, dw(\mathbf{z})=:J_{\sigma,1}(\mathbf{y})+J_{\sigma,2}(\mathbf{y}).
     \end{split}
 \end{equation*}
 Further, by~\eqref{eq:log_sigma},
 \begin{equation}\label{eq:J_sigma_1}
     \begin{split}
         J_{\sigma,1} (\mathbf{y}) &\leq C\int_{\mathbf{z}\in \sigma(\Gamma)} \frac{r^{\varepsilon/2}}{\|\mathbf{x}_0-\mathbf{z}\|^{\varepsilon/2}} \frac{r^{\varepsilon/2}}{r^{\varepsilon/2}+\|\mathbf{x}_0-\sigma(\mathbf{x}_0)\|^{\varepsilon/2}}\log \Big(\frac{\|\mathbf{x}_0-\sigma (\mathbf{x}_0)\|}{r}+4\Big)\| b\|_{\BMO}\\
         &\hskip1cm \times \frac{|f_2(\mathbf{z})|}{w(B(\sigma(\mathbf{x}_0),\|\sigma(\mathbf{x}_0)-\mathbf{z}\|))}\, dw(\mathbf{z})\\
         &\leq C\|b\|_{\BMO} \int_{\mathbf{z}\in \sigma(\Gamma)} \frac{r^{\varepsilon/2}}{\|\mathbf{x}_0-\mathbf{z}\|^{\varepsilon/2} }\cdot \frac{|f_2(\mathbf{z})|}{w(B(\sigma(\mathbf{x}_0),\|\sigma(\mathbf{x}_0)-\mathbf{z}\|))}\, dw(\mathbf{z})\\
         &\leq C\|b\|_{\BMO} \sum_{j=2}^\infty \int_{\mathbf{z}\in \sigma(\Gamma), \|\sigma(\mathbf x_0)-\mathbf z\|\sim 2^jr} \frac{r^{\varepsilon/2}}{\|\mathbf{x}_0-\mathbf{z}\|^{\varepsilon/2} }\cdot \frac{|f_2(\mathbf{z})|}{w(B(\sigma(\mathbf{x}_0),\|\sigma(\mathbf{x}_0)-\mathbf{z}\|))}\, dw(\mathbf{z})\\
         &\leq C\| b\|_{\BMO} Mf_2(\sigma (\mathbf{x})).
     \end{split}
 \end{equation}
We turn to considering $J_{\sigma,2}(\mathbf{y})$. 
 Applying H\"older's inequality and then the John-Nirenberg inequality~\eqref{eq:John} we obtain 
\begin{equation}\label{eq:J_sigma_2}
    \begin{split}
        J_{\sigma,2}(\mathbf{y})&\leq C\sum_{j=2}^\infty \int_{\mathbf{z}\in \sigma(\Gamma),\; \|\mathbf z-\sigma(\mathbf x_0)\|\sim 2^jr} \frac{r^{\varepsilon}}{2^{\varepsilon j}r^{\varepsilon}}|b_{\sigma(B)}-b(\mathbf{z})| \frac{|f_2(\mathbf{z})|}{w(B(\sigma(\mathbf{x}_0),2^jr))}\, dw(\mathbf{z})\\
        &\leq C\sum_{j=2}^\infty 2^{-\varepsilon j} \Big(\int_{\|\mathbf z-\sigma(\mathbf x_0)\|\sim 2^jr} |b_{\sigma(B)}-b(\mathbf z)|^{s'}\frac{dw(\mathbf z)}{w(B(\sigma (\mathbf x_0),2^jr))}\Big)^{1/s'}\\
        &\ \ \times \Big(\int_{\|\mathbf z-\sigma(\mathbf x_0)\|\leq  2^jr} |f_2(\mathbf z)|^{s}\frac{dw(\mathbf z)}{w(B(\sigma (\mathbf x_0),2^jr))}\Big)^{1/s}\\ 
        &\leq C\sum_{j=2}^\infty 2^{-\varepsilon j} j \| b\|_{\BMO} (M(|f_2|^s)(\sigma(\mathbf x)))^{1/s}\leq C\| b\|_{\BMO} (M(|f_2|^s)(\sigma (\mathbf{x})))^{1/s}.
    \end{split}
\end{equation}

Thus, by~\eqref{eq:J_sigma_1} and~\eqref{eq:J_sigma_2} we have got 
$$ g_{22}^{\#}(\mathbf{x})\leq C\sum_{\sigma\in G} \Big(Mf_2(\sigma (\mathbf{x})) + (M(|f_2|^s)(\sigma (\mathbf{x})))^{1/s}\Big).$$
Finally we turn to estimate  $g_{\sigma_j2}$. To this end we note that for $\mathbf z\in U_j$ and $\mathbf y\in B$ we have 
\begin{align*}
    \|\mathbf{z}-\mathbf{y}\| \geq \|\mathbf{z}-\mathbf{x}_0\|-\|\mathbf{x}_0-\mathbf{y}\| \geq 5r-r=4r,
\end{align*}
\begin{align*}
    \|\mathbf{x}_0-\sigma_j(\mathbf{x}_0)\| \leq \|\mathbf{x}_0-\mathbf{y}\|+\|\mathbf{z}-\mathbf{y}\|+\|\mathbf{z}-\sigma_{j}(\mathbf{x}_0)\| \leq 6r+\|\mathbf{z}-\mathbf{y}\| \leq \frac{5}{2}\|\mathbf{z}-\mathbf{y}\|,
\end{align*} 
Consequently, by~\eqref{eq:sum},
\begin{equation}\label{eq:R_on_Uj}
\begin{split}
    &\int_B  \sum_{\ell=-\infty}^{\infty}|K_{\ell}(\mathbf{z},\mathbf{y})|\, dw(\mathbf{y}) \leq C\int_B \frac{d(\mathbf{z},\mathbf{y})^{\varepsilon}}{\|\mathbf{z}-\mathbf{y}\|^{\varepsilon}}\frac{1}{w(B(\mathbf{z},d(\mathbf{z},\mathbf{y})))}\,dw(\mathbf{y})\\&\leq C\frac{r^{\varepsilon}}{(r+\|\mathbf{x}_0-\sigma_j(\mathbf{x}_0)\|)^{\varepsilon}}\int_B \frac{d(\mathbf{z},\mathbf{y})^\varepsilon}{r^{\varepsilon}}\frac{1}{w(B(\mathbf{z},d(\mathbf{z},\mathbf{y})))}\,dw(\mathbf{y})\\&\leq C\frac{r^{\varepsilon}}{(r+\|\mathbf{x}_0-\sigma_j(\mathbf{x}_0)\|)^{\varepsilon}}\int_{\mathcal{O}(B(\mathbf{z},16r))} \frac{d(\mathbf{z},\mathbf{y})^\varepsilon}{r^{\varepsilon}}\frac{1}{w(B(\mathbf{z},d(\mathbf{z},\mathbf{y})))}\,dw(\mathbf{y})\\&\leq C\frac{r^{\varepsilon}}{(r+\|\mathbf{x}_0-\sigma_j(\mathbf{x}_0)\|)^{\varepsilon}}\sum_{j=-4}^{\infty}\int_{d(\mathbf{z},\mathbf{y}) \sim 2^{j}r} \frac{d(\mathbf{z},\mathbf{y})^\varepsilon}{r^{\varepsilon}}\frac{1}{w(B(\mathbf{z},d(\mathbf{z},\mathbf{y})))}\,dw(\mathbf{y})\\&\leq C\frac{r^{\varepsilon}}{(r+\|\mathbf{x}_0-\sigma_j(\mathbf{x}_0)\|)^{\varepsilon}}\sum_{j=-4}^{\infty}{2^{-\varepsilon j}}\int_{d(\mathbf{z},\mathbf{y}) \sim 2^{-j}r} \frac{1}{w(B(\mathbf{z},2^{-j}r))}\,dw(\mathbf{y})\\&\leq C\frac{r^{\varepsilon}}{(r+\|\mathbf{x}_0-\sigma_j(\mathbf{x}_0)\|)^{\varepsilon}}.
\end{split}
\end{equation}
Hence, by~\eqref{eq:R_on_Uj} and~\eqref{eq:log_sigma}, 
\begin{equation*}
\begin{split}
    \frac{1}{w(B)} &\int_B |g_{\sigma_j2}(\mathbf{y})  -(g_{\sigma_j2})_B|\, dw(\mathbf{y})\leq \frac{2}{w(B)} \int_B  |g_{\sigma_j2}(\mathbf{y})|\, dw(\mathbf{y})\\
    &\leq \frac{2}{w(B)}\int_B \int_{U_j} \sum_{\ell=-\infty}^{\infty}|K_{\ell}(\mathbf{y},\mathbf{z})| |b_B-b(\mathbf{z})|\cdot |f_{\sigma_j}(\mathbf{z})|\, dw(\mathbf{z})\, dw(\mathbf{y})\\
    &\leq C \frac{r^{\varepsilon}}{(r+\|\sigma_j(\mathbf{x}_0)-\mathbf{x}_0\|)^{\varepsilon}}\frac{1}{w(B)} \int_{U_j} |b_B-b(\mathbf{z})|\cdot |f_{\sigma_j}(\mathbf{z})|\, dw(\mathbf{z})\\
    &\leq C\frac{r^{\varepsilon}}{(r+\|\sigma_j(\mathbf{x}_0)-\mathbf{x}_0\|)^{\varepsilon}}\Big(\frac{1}{w(B)}\int_{U_j}|b_B-b(\mathbf{z})|^{s'}\, dw(\mathbf{z})\Big)^{1/s'}\Big(\frac{1}{w(B)}\int_{U_j}|f_{\sigma_j}(\mathbf{z})|^s\, dw(\mathbf{z})\Big)^{1/s}\\
    &\leq C\frac{r^{\varepsilon}}{(r+\|\sigma_j(\mathbf{x}_0)-\mathbf{x}_0\|)^{\varepsilon}} \log\Big(\frac{\|\sigma_j({\mathbf{x}_0})-\mathbf{x}_0\|}{r}+4\Big)\| b\|_{\BMO} (M|f_{\sigma_j}|^s(\sigma(\mathbf{x})))^{1/s}, 
    \end{split}
\end{equation*}
so 
$$ |g_{\sigma_j2}^{\#}(\mathbf{x})|\leq C\| b\|_{\BMO} (M|f_{\sigma_j}|^s(\sigma(\mathbf{x})))^{1/s}.$$ 
Finally we end up with the estimate 
\begin{equation}\label{eq:finsl_sharp}
    \begin{split}
(\mathcal{C}f)^{\#}(\mathbf{x})&\leq C \|b\|_{\BMO} \Big( (M(|\mathbf{T}f_1|^s)(\mathbf{x})))^{1/s}+M(|\mathbf{T}f_2|^s(\mathbf{x})))^{1/s}\\ 
&+ \sum_{j=1}^{|G|-1}M(|\mathbf{T}f_{\sigma_j}|^s(\mathbf{x})))^{1/s} +\sum_{\sigma\in G} \big(Mf(\sigma(\mathbf{x})) +  (M|f|^s(\sigma(\mathbf{x})))^{1/s}\big) \Big).
    \end{split}
\end{equation}
Hence, using the  $L^{p_1}(dw)$-boundedness of the  Hardy--Littlewood maximal $M$ function for all $1<p_1<\infty$ and the fact that the measure $dw$ is $G$-invariant, we conclude~\eqref{eq:sharp} from \eqref{eq:finsl_sharp} and \eqref{eq:decomp_f}.
 \end{proof}

\section{Proof of Theorem\texorpdfstring{~\ref{teo:compact}}{3.2}}

\begin{lemma}\label{lem:limit}
Let $1<p<\infty$.  Assume that $b$ is a compactly supported Lipschitz function. Then 
\begin{equation}
    \lim_{m \to \infty}\|\mathcal{C}-\mathcal{C}_m\|_{L^p(dw) \longmapsto L^p(dw)}=0.
\end{equation}
\end{lemma}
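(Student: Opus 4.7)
The plan is to decompose $\mathcal{C}-\mathcal{C}_m=\mathcal{C}^++\mathcal{C}^-$, where $\mathcal{C}^\pm$ are obtained by restricting the sum in~\eqref{eq:comm} to the tails $\ell\ge m+1$ and $\ell\le -m-1$ respectively, and to show that each operator norm decays geometrically in $m$. Write $\supp b\subseteq B(0,R)$ and let $L$ denote the Lipschitz constant of $b$; separate arguments handle the small- and large-scale pieces.

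For the small-scale tail $\mathcal{C}^-$, I would apply Schur's test to the positive majorant $H^-(\mathbf{x},\mathbf{y})=|b(\mathbf{x})-b(\mathbf{y})|\sum_{\ell\le -m-1}|K_\ell(\mathbf{x},\mathbf{y})|$. By Theorem~\ref{teo:support}, the $\mathbf{y}$-support of $K_\ell(\mathbf{x},\cdot)$ lies in $\bigcup_{\sigma\in G}B(\sigma(\mathbf{x}),2^\ell)$; compact support of $b$ moreover forces $\mathbf{x},\mathbf{y}\in B(0,R+1)$ on $\{H^-\ne 0\}$, so $\|\mathbf{x}-\sigma(\mathbf{x})\|\le 2(R+1)$ uniformly. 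For each $\sigma\in G$ and $\ell\le -m-1$ I would distinguish two cases. When $\|\mathbf{x}-\sigma(\mathbf{x})\|\le 2\cdot 2^\ell$ the Lipschitz estimate $|b(\mathbf{x})-b(\mathbf{y})|\le L\|\mathbf{x}-\mathbf{y}\|\le 3L\cdot 2^\ell$ and the consequence $|K_\ell|\lesssim w(B(\mathbf{x},2^\ell))^{-1}$ of~\eqref{eq:scaled_CZ1} give a $\mathbf{y}$-integral of order $L\cdot 2^\ell$. When $\|\mathbf{x}-\sigma(\mathbf{x})\|>2\cdot 2^\ell$ the decay factor $(2^\ell/\|\mathbf{x}-\mathbf{y}\|)^\varepsilon$ in~\eqref{eq:scaled_CZ1} combined with $\|\mathbf{x}-\mathbf{y}\|\sim\|\mathbf{x}-\sigma(\mathbf{x})\|$ produces a contribution of order $L\,\|\mathbf{x}-\sigma(\mathbf{x})\|^{1-\varepsilon}\,2^{\ell\varepsilon}$. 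Summing over $\ell\le -m-1$ and over the finite group $G$, and invoking the uniform bound on $\|\mathbf{x}-\sigma(\mathbf{x})\|$, yields $\sup_\mathbf{x}\int H^-\,dw(\mathbf{y})\lesssim 2^{-m\varepsilon}$; the symmetry of~\eqref{eq:scaled_CZ1} in $\mathbf{x}$ and $\mathbf{y}$ gives the same bound for $\sup_\mathbf{y}\int H^-\,dw(\mathbf{x})$, and Schur interpolation then produces $\|\mathcal{C}^-\|_{L^p\to L^p}\lesssim 2^{-m\varepsilon}$.

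For the large-scale tail $\mathcal{C}^+$ a direct Schur estimate fails because $\int|K_\ell|\,dw$ is uniformly of order $1$; instead I would use the telescoping identity $\sum_{\ell>m}K_\ell=K^{\{2^m\}}$ (valid since $K^{\{2^M\}}*f\to 0$ strongly in $L^p$ as $M\to\infty$) to write $\mathcal{C}^+f=b\cdot(K^{\{2^m\}}*f)-K^{\{2^m\}}*(bf)$ and exploit the compact support of $b$ directly. For $\ell>m$ and points $\mathbf{x},\mathbf{y}$ such that $(b(\mathbf{x})-b(\mathbf{y}))K_\ell(\mathbf{x},\mathbf{y})\ne 0$, one of the variables lies in $B(0,R)$ and the other in $B(0,R+2^\ell)$; on that region~\eqref{eq:balls_asymp} gives $w(B(\mathbf{x},2^\ell))\sim 2^{\ell\mathbf{N}}$, and~\eqref{eq:scaled_CZ1} collapses to the favourable bound $|K_\ell(\mathbf{x},\mathbf{y})|\le C\,2^{-\ell\mathbf{N}}$. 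A H\"older estimate on the small regions $B(0,R)$ (where $b$ is supported) and $B(0,R+2^\ell)$ (where the convolution is supported) then produces $\|b\cdot(K_\ell*f)\|_{L^p}\lesssim \|b\|_\infty\,2^{-\ell\mathbf{N}/p}\|f\|_{L^p}$ and $\|K_\ell*(bf)\|_{L^p}\lesssim \|b\|_\infty\,2^{-\ell\mathbf{N}/p'}\|f\|_{L^p}$, summing to $\|\mathcal{C}^+\|_{L^p\to L^p}\lesssim 2^{-m\mathbf{N}/\max(p,p')}$.

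The main obstacle is the reflection contribution ($\sigma\ne\mathrm{id}$) in the small-scale piece. For $\sigma=\mathrm{id}$ one automatically has $\|\mathbf{x}-\mathbf{y}\|\le 2^\ell$, so the Lipschitz hypothesis alone recovers the standard Calder\'on--Zygmund rate $2^{-m}$. For $\sigma\ne\mathrm{id}$, however, $\|\mathbf{x}-\mathbf{y}\|$ is of order $\|\mathbf{x}-\sigma(\mathbf{x})\|$ independent of $\ell$, and the Lipschitz bound by itself provides no geometric gain; the H\"older-type factor $(2^\ell/\|\mathbf{x}-\mathbf{y}\|)^\varepsilon$ in~\eqref{eq:scaled_CZ1} is the crucial mechanism producing the $2^{\ell\varepsilon}$ decay in $\ell$, and compact support of $b$ is needed to keep $\|\mathbf{x}-\sigma(\mathbf{x})\|^{1-\varepsilon}$ under uniform control.
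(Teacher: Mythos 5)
Your proposal is correct and follows essentially the same route as the paper: split $\mathcal C-\mathcal C_m$ into the tails $\ell\le -m-1$ and $\ell\ge m+1$, use the support theorem for Dunkl translations together with~\eqref{eq:scaled_CZ1} and the Lipschitz/compact-support hypotheses on $b$, and obtain geometric decay in $m$ in each piece. The only cosmetic difference is that you package the small-scale tail via Schur's test while the paper derives a pointwise bound by $CL_b2^{-\varepsilon m}\sum_{\sigma\in G}Mf(\sigma(\mathbf x))$ and invokes $L^p(dw)$-boundedness of $M$; your explicit case distinction $\|\mathbf x-\sigma(\mathbf x)\|\lessgtr 2\cdot 2^{\ell}$ makes visible the same mechanism (the kernel's $\varepsilon$-H\"older gain combined with the boundedness of $\|\mathbf x-\sigma(\mathbf x)\|$ on $\mathrm{supp}\,b$) that the paper uses implicitly.
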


\begin{proof}
Let $r_b>1$ be such that $\text{supp}\, b\subset B(0,r_b)$ {and let $L_b>0$ be such that $|b(\mathbf{x})-b(\mathbf{y})|\leq L_b\|\mathbf{x}-\mathbf{y}\|$ for all $\mathbf{x},\mathbf{y} \in \mathbb{R}^N$}. By  \eqref{eq:comm}, it is enough to prove that there is a constant $C>0$ such that for all positive integers $m$ such that $2^{m} \geq 2r_b$, $f \in L^p(dw)$, and $\mathbf{x} \in \mathbb{R}^N$, we have
\begin{equation}\label{eq:bxy_small}
    \sum_{\ell=-\infty}^{-m-1}\left|\int_{\mathbb{R}^N}(b(\mathbf{x})-b(\mathbf{y}))K_{\ell}(\mathbf{x},\mathbf{y})f(\mathbf{y})dw(\mathbf{y})\right| \leq C{L_b}2^{-\varepsilon m}\sum_{\sigma \in G}Mf(\sigma(\mathbf{x})),
\end{equation}
\begin{equation}\label{eq:bx}
    \sum_{\ell=m+1}^{\infty}\left|b(\mathbf{x})\int_{\mathbb{R}^N}K_{\ell}(\mathbf{x},\mathbf{y})f(\mathbf{y})dw(\mathbf{y})\right| \leq C{\|b\|_{L^{\infty}}}2^{-mN/p}\chi_{B(0,r_b)}(\mathbf{x})\|f\|_{L^p(dw)},
\end{equation}
\begin{equation}\label{eq:by}
    \sum_{\ell=m+1}^{\infty}\left|\int_{\mathbb{R}^N}K_{\ell}(\mathbf{x},\mathbf{y})b(\mathbf{y})f(\mathbf{y})dw(\mathbf{y})\right| \leq C{\|b\|_{L^{\infty}}}\left(\chi_{B(0,2^m)}(\mathbf{x})2^{-m\mathbf{N}}+\chi_{B(0,2^m)^c}(\mathbf{x})\|\mathbf{x}\|^{-\mathbf{N}}\right)\|f\|_{L^p(dw)}.
\end{equation}

Let us note that by Theorem~\ref{teo:support},  $\supp K_{\ell}(\mathbf{x},\cdot) \subseteq \mathcal{O}(B(\mathbf{x},2^{\ell}))$. Hence, by~\eqref{eq:scaled_CZ1} and the doubling property of the measure $dw$ (see~\eqref{eq:growth}) we have 
\begin{equation}\label{eq:K_l}
    |K_\ell(\mathbf{x},\mathbf{y}) |\leq C \left(1+ \frac{\|\mathbf x-\mathbf y\|}{2^{ \ell}}\right)^{-\varepsilon} w(B(\mathbf x,2^{\ell}))^{-1}.
\end{equation}

In order to prove~\eqref{eq:bxy_small}, we use now the Lipschitz condition for $b$ and get 
\begin{align*}
    &\sum_{\ell=-\infty}^{-m-1}\left|\int_{\mathbb{R}^N}(b(\mathbf{x})-b(\mathbf{y}))K_{\ell}(\mathbf{x},\mathbf{y})f(\mathbf{y})dw(\mathbf{y})\right| \\&\leq C\sum_{\ell=-\infty}^{-m-1}\frac{{L_b}}{w(B(\mathbf{x},2^{\ell}))}\int_{\mathcal{O}(B(\mathbf{x},2^{\ell}))}\|\mathbf{x}-\mathbf{y}\|^{\varepsilon}\frac{2^{\varepsilon \ell}}{\|\mathbf{x}-\mathbf{y}\|^{\varepsilon}}|f(\mathbf{y})|\,dw(\mathbf{y}) \leq C{L_b}2^{-\varepsilon m}\sum_{\sigma \in G}Mf(\sigma(\mathbf{x})).
\end{align*}
In order to prove~\eqref{eq:bx}, let us note that by~\eqref{eq:balls_asymp}, $w(B(\mathbf{x},2^{\ell})) \geq c2^{\ell N}$ for all $\ell\geq 0$. Hence, from~\eqref{eq:K_l}, H\"older's inequality, and the fact that $\supp b \subseteq B(0,r_b)$ we conclude that  
\begin{equation}\label{eq:Holder_calc}
\begin{split}
    & \sum_{\ell=m+1}^{\infty}\left|b(\mathbf{x})\int_{\mathbb{R}^N}K_{\ell}(\mathbf{x},\mathbf{y})f(\mathbf{y})\,dw(\mathbf{y})\right| \\
    &\leq C\chi_{B(0,r_b)}(\mathbf{x}) \|b\|_{L^\infty} \sum_{\ell=m+1}^{\infty}\left(\int_{\mathcal{O}(B(\mathbf{x},2^{\ell}))}\frac{1}{w(B(\mathbf{x},2^{\ell}))^{p'}}\,dw(\mathbf{y})\right)^{1/p'}\|f\|_{L^p(dw)} \\
    &\leq C\chi_{B(0,r_b)}(\mathbf{x}) \|b\|_{L^\infty} 2^{-mN/p}\|f\|_{L^p(dw)}.
\end{split}
\end{equation}
Finally, for~\eqref{eq:by}, we consider two cases. If $\|\mathbf{x}\| \leq 2^m$, then similarly to~\eqref{eq:Holder_calc} we get
\begin{equation}\label{eq:Holder_calc_1}
\begin{split}
    & \sum_{\ell=m+1}^{\infty}\left|\int_{\mathbb{R}^N}K_{\ell}(\mathbf{x},\mathbf{y})b(\mathbf{y})f(\mathbf{y})dw(\mathbf{y})\right| \\
    &\leq C\chi_{B(0,2^m)}(\mathbf{x} ) \|b\|_{L^\infty} \sum_{\ell=m+1}^{\infty}\left(\int_{B(0,r_b)}\frac{1}{w(B(\mathbf{y},2^{\ell}))^{p'}}\,dw(\mathbf{y})\right)^{1/p'}\|f\|_{L^p(dw)}\\
     &\leq C\chi_{B(0,2^m)}(\mathbf{x})\| b\|_{L^\infty} 2^{-m\mathbf{N}}\|f\|_{L^p(dw)}.
\end{split}
\end{equation}
Now we assume that $\|\mathbf{x}\|>2^m$. {Recall that $2^m\geq 2r_b$. From Theorem~\ref{teo:support} and the fact that $\supp K_{\ell} \subseteq B(0,2^{\ell})$, we conclude that 
$$ K_{\ell}(\mathbf{x},\mathbf{y})=0 \quad \text{ for all } \mathbf{y} \in B(0,r_b) \ \text{ and } \ell \geq m \ \text{ such that }  2^{\ell+1} <\|\mathbf{x}\|.$$ 
Further, by~\eqref{eq:K_l} and the doubling property~\eqref{eq:growth} of $dw$,
\begin{align*}
    |K_{\ell}(\mathbf{x},\mathbf{y})| \leq \frac{C}{w(B(0,2^\ell))} \quad \text{ for all } \mathbf{y} \in B(0,r_b) \ \text{ and } \ell \geq m \ \text{ such that }  2^{\ell+1} \geq \|\mathbf{x}\|.
\end{align*} }
Hence, by the fact that $b$ is supported by $B(0,r_b)$, we obtain
\begin{align*}
    &\sum_{\ell=m+1}^{\infty}\left|\int_{\mathbb{R}^N}K_{\ell}(\mathbf{x},\mathbf{y})b(\mathbf{y})f(\mathbf{y})dw(\mathbf{y})\right| \leq C\sum_{\ell=\lfloor\log(\|\mathbf{x}\|)\rfloor-2}^{\infty}\int_{B(0,r_b)}\frac{\| b\|_{L^\infty}}{w(B(0,2^{\ell}))}|f(\mathbf{y})|\,dw(\mathbf{y})\\&\leq C\frac{\| b\|_{L^\infty}}{w(B(0,\|\mathbf{x}\|))}\int_{B(0,r_b)}|f(\mathbf{y})|\,dw(\mathbf{y}) \leq C {r_b^{\mathbf N/p'}} \| b\|_{L^\infty} \|\mathbf{x}\|^{- \mathbf{N}}\|f\|_{L^p(dw)}.
\end{align*}
\end{proof}

\begin{proof}[Proof of Theorem\texorpdfstring{~\ref{teo:compact}}{ 3.2}]
Let us recall that the compactly supported Lipschitz functions form a dense subspace in $\text{VMO}$. Hence, by Theorem~\ref{teo:bounded}, it suffices to show that $\mathcal{C}$ is a compact operator on $L^p(dw)$ for any compactly supported  Lipschitz function $b$. 
Further, thanks to Lemma~\ref{lem:limit}, it is enough to prove that for any compactly supported Lipschitz function $b$, the commutators $\mathcal{C}_m$ are compact operators on $L^p(dw)$ for large enough positive integers $m$.  To this end, let $r_b>1$ be such that $\text{supp}\, b\subset B(0,r_b)$.  First we  note that if $f$ is supported by $B(0,r_b+2^m)^c$, then $b \cdot f \equiv 0$. Moreover, since $\supp \sum_{\ell=-m}^{m}K_{\ell}(\mathbf{x},\cdot) \subseteq \mathcal{O}(B(\mathbf{x},2^{m}))$ (cf. Theorem~\ref{teo:support}), we have $\sum_{\ell=-m}^{m}K_{\ell}(\mathbf{x},\cdot) \cdot f (\cdot ) \equiv 0$  for all $\mathbf{x} \in B(0,r_b)$. Consequently, for all $f \in L^p(dw)$ we have
\begin{align*}
    \mathcal{C}_mf(\mathbf{x})=\int_{\mathbb{R}^N}(b(\mathbf{x})-b(\mathbf{y}))\sum_{\ell=-m}^{m}K_{\ell}(\mathbf{x},\mathbf{y})f(\mathbf{y})\,dw(\mathbf{y})=\mathcal{C}_m(f \cdot \chi_{B(0,r_b+2^m)})(\mathbf{x}).
\end{align*}
Moreover, since $\supp \sum_{\ell=-m}^{m}K_{\ell}(\mathbf{x},\cdot) \subseteq \mathcal{O}(B(\mathbf{x},2^{m}))$,
\begin{align*}
    \supp \mathcal{C}_m(f)=\supp \mathcal{C}_m(f \cdot \chi_{B(0,r_b+2^m)}) \subseteq B(0,r_b+2^{m+1}).
\end{align*}
Let $\Omega=B(0,r_b+2^{m+1})$. The proof now is reduced to showing that $\mathcal{C}_m$ is a compact operator $L^p(\Omega,dw) \longmapsto L^p(\Omega,dw)$. The estimates~\eqref{eq:scaled_CZ1} and~\eqref{eq:scaled_CZ2} imply
\begin{align*}
    |\mathcal{C}_m f(\mathbf{x})|\leq C_{\Omega,m}\| f\|_{L^p(\Omega,dw)},
\end{align*}
\begin{align*}
     |\mathcal{C}_mf(\mathbf{x})-\mathcal{C}_m f(\mathbf{x}')|\leq C_{\Omega,m} \|\mathbf{x}-\mathbf{x}'\|^{\varepsilon} \cdot\| f\|_{L^p(\Omega,dw)}
\end{align*}
for all $\mathbf{x},\mathbf{x}' \in \mathbb{R}^N$. By the Arzel\'a-Ascoli  theorem the set
\begin{align*}
\{\mathcal{C}_m f: \ f\in L^p(\Omega,dw), \| f\|_{L^p(\Omega,dw)}\leq 1\}    
\end{align*}
is relatively compact in $C(\Omega)$ with the $\sup$-norm. Hence it is relatively compact in $L^p(\Omega,dw)$, since $w(\Omega)$ is finite. 
\end{proof}

\end{document}